\numberwithin{equation}{section}
\newtheorem{theorem}{Theorem}[section]
\newtheorem{corollary}[theorem]{Corollary}
\newtheorem{lemma}[theorem]{Lemma}
\theoremstyle{definition}
\newtheorem{remark}[theorem]{Remark}
\newcommand{\smfrac}[2]{{\textstyle \frac{#1}{#2}}}
\def\!{\mathop{\mathrm{!}}}
\def\R{\mathbb{ R}}
\def\N{\mathbb{ N}}
\def\E{\mathcal{E}}
\def\G{\mathcal{G}}
\def\rcut{{r_{\rm cut}}}
\def\L{\Lambda}
\def\Rc{\mathcal{R}}
\def\Z{\mathbb{Z}}
\def\Wc{\dot{\mathcal{W}}^{\rm c}}
\def\Wi{\dot{\mathcal{W}}^{1,2}}
\def\WH{\dot{W}^{1,2}}
\def\mA{{\sf A}}
\def\mB{{\sf B}}
\def\bsep{\,\big|\,}
\def\b{\big}
\def\<{\langle}
\def\>{\rangle}
\def\E{\mathcal{E}}
\def\Ehom{\mathcal{E}^{\rm hom}}
\def\Rdef{R_{\rm def}}
\def\RS{R_{\rm S}}
\def\HH{\mathcal{W}}
\def\XXint#1#2#3{{\setbox0=\hbox{$#1{#2#3}{\int}$ }
\vcenter{\hbox{$#2#3$ }}\kern-.6\wd0}}
\def\Wper{\mathcal{W}^{\rm per}}   
\def\WHper{W^{\rm per}}   
\def\us{\bar{u}}                    
\def\Lhom{\Lambda^{\rm hom}}
\def\Dh{D^{\rm h}}
\newlength{\boxwidth}
\date \today
\title[Uniform Convergence of the Supercell Approximation]{Sharp Uniform Convergence Rate of the Supercell Approximation of a Crystalline Defect}
\author{Julian Braun}
\author{Christoph Ortner}
\thanks{JB and CO are supported by ERC Starting Grant 335120 and by EPSRC Grant EP/R043612/1}
\address[JB, CO]{Mathematics Institute, University of Warwick, Coventry CV4 7AL, UK.}
\subjclass[2010]{Primary: 65N12; Secondary: 65N15, 70C20, 74E15}
\keywords{Crystalline defect, supercell approximation, uniform convergence}
\begin{document}
\begin{abstract}
  We consider the geometry relaxation of an isolated point defect embedded in a
  homogeneous crystalline solid, within an atomistic description. We prove a
  sharp convergence rate for a periodic supercell approximation with respect to
  uniform convergence of the discrete strains.
\end{abstract}

\maketitle

\section{Introduction}
\label{sec:intro}
The high computational cost of atomistic material models requires that the
numerical geometry equilibration of crystalline defects is performed in small
computational cells, employing ``artificial boundary conditions'' to emulate the
crystalline far-field behaviour. Aside from the model error (due to
approximations in the potential energy surface) the main simulation error is
therefore the error induced by the boundary condition. In \cite{EOS2016} a
framework was introduced to rigorously estimate these errors for a variety of
defects and boundary conditions, including clamped and periodic, as well as to
estimate approximation errors in atomistic/continuum and QM/MM multi-scale
schemes \cite{2014-bqce,Olson2018-zz,2015-qmtb2}. All of these works control
the error in the canonical energy-norm.

In the present work we will prove the first sharp approximation error estimate
for crystal defect equilibration in the {\em maximum norm} for the strains in
dimension greater than one (see \cite{2008-M2AN-QC1D, 2010-ARMA-QCF} for
examples of results in one dimension and \cite{Lu2013-xj} for a result in three
but in the absence of defects). To highlight the main ideas required for this
extension in a transparent setting, we have chosen to restrict this work to
point defects embedded in an infinite homogeneous host crystal, under an
interatomic potential interaction. This system is approximated using a supercell
method with periodic boundary conditions, the most widely used scheme for
simulating point defects.

Our main motivation for this work is \cite{2018-entropy}, where we require a
sharp uniform convergence rate to obtain sharp convergence error estimates on
the vibrational entropy of a point defect, as well as
\cite{BHOdefectdevelopment} where our new results significantly simplify the
development of a multi-pole expansion theory for crystalline defects. However,
our results are also of independent interest, namely in any scenario where the
defect core geometry is of importance but not the far-field, in which case the
energy-norm severely overestimates the simulation error. Concretely, the
best-approximation error in the maximum norm is significantly smaller than in
the energy norm, and moreover, there is ample numerical evidence that the
best-approximation is indeed attained.

Unsurprisingly, and similarly as for maximum-norm error estimates for numerical
approximation of PDEs \cite{Rannacher1982-so,Dolzmann1999-jh}, our analysis
relies on ideas from elliptic regularity theory, specifically sharp
Green's function error estimates and a discrete Caccioppoli inequality.

Notably, our analysis applies not only to energy minimisers but to general
equilibria, in particular saddle points, which are important objects in
studying the mobility of crystalline defects. For these general equilibria,
our energy-norm error estimates are new as well.

{\bf Outline: } In \S~\ref{sec:results} we formulate the geometry equilibration
problem, introduce the supercell approximation, state our main convergence
results, and present numerical examples demonstrating that they are indeed
sharp. In \S~\ref{sec:proofs} we present the proofs.

\section{Results}
\label{sec:results}

\subsection{Geometry equilibration of a point defect}
\label{sec:results:model}
The reference configuration of a point defect embedded in a $d$-dimensional
homogeneous host crystal is given by a set $\L \subset \R^d$,
satisfying
\begin{itemize}
\item[{\bf (L)}] There exists $\Rdef > 0$, $\mA \in \R^{d \times d}$ invertible,
    such that \\ $\L \cap B_{\Rdef}$ is finite and
     $\L \setminus B_{\Rdef} = \mA \Z^d \setminus B_{\Rdef}$
\end{itemize}
We assume throughout that $d \geq 2$.

A lattice displacement is a function $u : \L \to \R^m$, where $m \geq 1$ is the
range dimension. Given an interaction cutoff radius $\rcut > 0$, the {\em
interaction range} at site $\ell$ is given by
\[
  \Rc_\ell := \{ n - \ell \,|\, n \in \L \} \cap B_\rcut.
\]
In particular, for $\ell > \rcut+\Rdef$ this is independent of $\ell$ and we write $\Rc_\ell=\Rc$.
The associated finite difference gradient is given by
\begin{equation*}
   Du(\ell) := \b( u(\ell+\rho) - u(\ell) \b)_{\rho\in\Rc_\ell}.
\end{equation*}
We assume $\rcut$ is large enough such that ${\rm span}\, \Rc_\ell = \R^d$ for all
$\ell \in \L$ and the graph with vertices $\L$ and edges $\{(\ell, \ell+\rho):
\ell \in \L, \rho \in \Rc_\ell\}$ is connected.

Of particular interest are compact and finite-energy displacements described,
respectively, by the spaces
\begin{equation}
   \label{eq: spaces}
   \begin{split}
   \Wc := \Wc(\L) &:= \b\{u:\L\to\R^m \bsep {\rm supp}(Du)~\text{is compact}\b\} \qquad \text{and}
   \\ \Wi :=  \Wi(\L) &:= \b\{u:\L\to\R^m \bsep \| Du \|_{\ell^2} < \infty \b\},
   \end{split}
\end{equation}
where
\begin{equation*}
   |Du(\ell)|^2 := \sum_{\rho \in \Rc_\ell} |D_\rho u(\ell)|^2
   \qquad \text{and} \qquad
   \|Du\|_{\ell^2(\L)} := \b\|\,|Du|\b\|_{\ell^2(\L)}.
\end{equation*}
The latter defines a semi-norm on both $\Wc$ and $\Wi$.

The homogeneous background lattice is $\Lhom := \mA \Z^d$, which of course
satisfies all foregoing conditions. Since we will frequently convert between a
defective lattice $\L$ and the associated homogeneous lattice $\Lhom$ we denote
the associated finite-difference operator by $\Dh u(\ell) = (D_\rho
u(\ell))_{\rho \in \Rc}$. We will normally identify $\Wc = \Wc(\L)$, $\Wi =
\Wi(\L)$ but make the domains explicit in the  case of the homogeneous system,
$\Wc(\Lhom), \Wi(\Lhom)$.

For each $\ell \in \L$ let $V_\ell \in C^4( (\R^m)^{\Rc_\ell})$,  with
$V_\ell(0) = 0$,  be the site-energy associated with the lattice site $\ell$,
then the total potential energy difference is given by
\begin{equation} \label{eq:defn E}
  \E(u) := \sum_{\ell \in \L} V_\ell(Du(\ell)).
\end{equation}
The re-normalisation $V_\ell(0) = 0$ is made for the sake of simplicity of
notation and signals that $\E$ is in fact an energy-difference. We assume that the interaction is homogeneous away from the defect, i.e., $V_\ell=V$ for all $\lvert \ell \rvert >  \rcut+\Rdef$, and that $V$ satisfies the natural point symmetry $V(A) = V((-A_{-\rho})_{\rho \in\Rc})$ for all $A \in (\R^m)^{\Rc}$.

$\E(u)$ is {\it a priori} only defined for $u \in \Wc$ or, slightly more
generally, for $u : \L \to \R^m$ with $|Du| \in \ell^1(\L)$. To define it on
$\Wi$, it is proven in \cite[Lemma 2.1]{EOS2016} that $\E : \Wc \to \R$ is
continuous with respect to the $\| D\cdot\|_{\ell^2}$-semi-norm and that there
exists a unique continuous extension to $\Wi$. We still call this extension
$\E$ and remark that, according to \cite[Lemma 2.1]{EOS2016}, $\E \in C^3(\Wi)$. This is only to justify our notation as we will never in fact reference the energy itself in this paper, but work directly with its first variation,
\[
  \< \delta \E(u), v \> = \sum_{\ell \in \L} \nabla V_\ell(Du(\ell)) \cdot Dv(\ell)
  \qquad \text{for } v \in \Wc.
\]

We are interested in equilibrium configurations, $\delta \E(\us) = 0$,
or written as a variational formulation,
\begin{equation} \label{eq:equil}
    \big\< \delta \E(\us), v \big\> = 0 \qquad \forall v \in \Wi  .
\end{equation}

We say that $u \in \Wi$ is {\em inf-sup stable} if $\delta^2 \E(u) : \Wi \to
(\Wi)'$ is an isomorphism which can, for example, be quantified via
\begin{equation} \label{eq:infsup}
  \inf_{ \substack{v \in \Wi \\ \|Dv\|_{\ell^2} = 1} }
  \sup_{ \substack{w \in \Wi \\ \|Dw\|_{\ell^2} = 1} }
  \big\< \delta^2 \E(u) v, w \big\> > 0.
\end{equation}
Of particular interest is the stability of solutions $u = \us$.

In addition, our analysis requires stability of the
homogeneous background crystal, a standard assumption in solid state physics
known as {\em phonon stability} \cite{Wallace1998-hx}, which in our notation
can be written as
\begin{equation} \label{eq:stabhom}
    \sum_{\ell \in \Lhom} \nabla^2 V(0)\big[ \Dh v(\ell), \Dh v(\ell) \big]
    \geq c_0 \| \Dh v \|_{\ell^2(\Lhom)}^2
    \qquad \forall v \in \Wi(\Lhom),
\end{equation}
for some $c_0 > 0$.
We assume throughout that \eqref{eq:stabhom} holds.

Under the lattice stability assumption \eqref{eq:stabhom}, it is shown in
\cite[Thm.\,1]{EOS2016} that any solution $\us \in \Wi$ to \eqref{eq:equil}
satisfies
\begin{equation} \label{eq:decay_ubar}
  |D^j \us(\ell)| \lesssim |\ell|^{1-d-j} \qquad \text{for } j = 1, 2, 3;
        \quad
        |\ell| \text{ sufficiently large.}
\end{equation}

\subsection{Supercell approximation}
\label{sec:results:supercell}
We consider a finite-domain approximation to \eqref{eq:equil} with periodic
boundary conditions, which we will call the {\em supercell approximation}. To
that end, let $\mB = (b_1, \dots, b_d) \in \R^{d \times d}$ invertible such that
$b_i \in \mA \Z^d$. For each $N \in \N$, let
\[
  \Lambda_N := \L \cap \mB (-N, N]^d
  \qquad \text{and} \qquad
  \L_N^{\rm per} := \bigcup_{\alpha \in 2N \Z^d} \big( \mB \alpha + \Lambda_N \big).
\]
Then the space of periodic displacements is given by
\[
  \Wper_N := \Wper_N(\L_N) := \big\{ u : \L_N^{\rm per} \to \R^m \,\big|\,
                u(\ell +  \mB \alpha) = u(\ell)
                \text{ for } \alpha \in 2 N \Z^d \big\}.
\]
For $u \in \Wper_N$ and for $N$ sufficiently large, the periodic potential
energy approximation is given by
\[
  \E_N(u) := \sum_{\ell \in \L_N} V_\ell(Du(\ell))
\]
and the resulting periodic supercell approximation to \eqref{eq:equil} by
\begin{equation} \label{eq:results:supercellapprox}
  \< \delta \E_N( \us_N ), v \> = 0 \qquad \forall v \in \Wper_N.
\end{equation}

\subsection{Sharp uniform convergence rate}
\label{sec:results:convergence}
While $\Wper_N \not\subset \Wi$, we can still compare $D\us$ and $D\us_N$ pointwise.

\begin{theorem} \label{th:main theorem}
Let $\us \in \Wi$ be an inf-sup stable solution to \eqref{eq:equil}, then
  there exist $C > 0$ such that, for $N$ sufficiently large, there are $\bar{u}_N \in \Wper_N$ satisfying \eqref{eq:results:supercellapprox} as well as
  \begin{align*}
    \| D\bar{u}_N - D \bar{u} \|_{\ell^\infty(\Lambda_N)}
    \leq C N^{-d}.
  \end{align*}
\end{theorem}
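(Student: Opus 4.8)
The plan is to reduce the nonlinear equilibrium problem to a linear one, solve the linearized problem by a Green's-function / discrete Caccioppoli argument that yields pointwise control, and then transfer the estimate back through an inverse-function-theorem fixed-point argument. First I would set up a natural candidate for $\us_N$: restrict $\us$ to $\Lambda_N$ and periodize it, obtaining a trial displacement $\tilde u_N \in \Wper_N$, and measure the residual $\langle \delta\E_N(\tilde u_N), v\rangle$ for $v\in\Wper_N$. Because $\us$ solves the infinite-lattice equation and the site energies agree with the homogeneous $V$ outside $B_{\rcut+\Rdef}$, this residual is supported (after telescoping) near the periodic ``seams'' $\partial(\mB(-N,N]^d)$, where $\eqref{eq:decay_ubar}$ gives $|D\us|\lesssim N^{1-d}$ and $|D^2\us|\lesssim N^{-d}$. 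A careful summation of the mismatch over the boundary layer — this is the discrete analogue of estimating a periodization error — should produce a residual whose dual ($\ell^2$-type) norm is $O(N^{1-d})$ in energy norm but, crucially, is localized, which is what will later feed the pointwise estimate.

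Next I would establish the \emph{linear pointwise} estimate: if $w$ solves $\langle \delta^2\E(\us) w, v\rangle = \langle f, v\rangle$ with $f$ a finitely-supported (or fast-decaying) functional, then $\|Dw\|_{\ell^\infty}$ is controlled by an appropriately weighted norm of $f$. This is the heart of the paper and where I expect the main obstacle to lie. The strategy mirrors maximum-norm estimates for finite-element methods \cite{Rannacher1982-so,Dolzmann1999-jh}: freeze coefficients to the homogeneous operator $\nabla^2 V(0)$, use phonon stability \eqref{eq:stabhom} to get a lattice Green's function $G$ with the sharp decay $|D G(\ell)|\lesssim |\ell|^{1-d}$ (and $|D^2 G|\lesssim|\ell|^{-d}$) obtained via Fourier analysis on $\Lhom$, then treat the defect perturbation $\delta^2\E(\us)-\nabla^2 V(0)[\Dh\cdot,\Dh\cdot]$ — which decays like $|\ell|^{-d}$ by \eqref{eq:decay_ubar} — as a compact perturbation, absorbing it using inf-sup stability \eqref{eq:infsup} together with a \emph{discrete Caccioppoli inequality} to upgrade energy-norm control of $w$ on annuli to pointwise control of $Dw$. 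The Caccioppoli step is what converts the global $\ell^2$ bound into the decay needed to run a dyadic/iteration argument giving $\|Dw\|_{\ell^\infty}\lesssim N^{-d}$ when $f$ is the localized seam residual above.

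Finally, I would close the argument with a quantitative inverse function theorem. Writing $\us_N = \tilde u_N + w$, the equation \eqref{eq:results:supercellapprox} becomes $\langle\delta^2\E_N(\tilde u_N)w,v\rangle = -\langle\delta\E_N(\tilde u_N),v\rangle + \text{(higher order in }w)$ for $v\in\Wper_N$; one checks that $\delta^2\E_N(\tilde u_N)$ is invertible for $N$ large with bounds uniform in $N$ (this uses stability of $\us$ plus the decay estimate \eqref{eq:decay_ubar} to compare the periodic Hessian with the infinite one), and that the nonlinear remainder is Lipschitz with a small constant on a ball of radius $O(N^{-d})$ in the $\|D\cdot\|_{\ell^\infty}$-controlled norm — here $V_\ell\in C^4$ and the uniform bound on $D\tilde u_N$ are used. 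A Banach fixed-point argument in the appropriate weighted/pointwise space then produces the solution $\us_N$ and simultaneously the bound $\|Dw\|_{\ell^\infty(\Lambda_N)}\lesssim N^{-d}$, whence $\|D\us_N - D\us\|_{\ell^\infty(\Lambda_N)} \le \|Dw\|_{\ell^\infty} + \|D\tilde u_N - D\us\|_{\ell^\infty(\Lambda_N)}$, and the second term is also $O(N^{-d})$ since on $\Lambda_N$ the periodization only alters $D\us$ through the far-field tail contributions, each of size $O(N^{-d})$. The sharpness (discussed in the numerical section) comes from the fact that the $N^{-d}$ rate is exactly the size of the leading multipole/dipole tail felt across a cell of diameter $\sim N$, and no cancellation improves it generically.
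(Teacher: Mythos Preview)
Your proposal identifies the correct ingredients --- Green's-function decay, a discrete Caccioppoli inequality, and a seam-localized residual --- but organises them differently from the paper, and one step is a genuine gap.

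The paper does \emph{not} run a fixed-point argument in a pointwise space. Instead it first obtains existence of $\us_N$ and the energy-norm rate $\|D\us_N - D T_N^{\rm per}\us\|_{\ell^2}\lesssim N^{-d/2}$ by the standard $\ell^2$ inverse-function theorem (this only needs the periodic inf-sup condition, Lemma~\ref{th:inf-sup-N}), and \emph{then} upgrades to $\ell^\infty$ as an a~posteriori regularity statement for the already-existing error $e_N:=\us_N - T_N^{\rm per}\us$. Concretely, Lemma~\ref{lem:pointwise} uses the periodic homogeneous Green's function $\G_N$ to derive a self-referential inequality for $|De_N(\ell)|$ outside a fixed core; Lemma~\ref{lem:ell2} is a Caccioppoli estimate for $e_N$ that controls the inner $\ell^2$-mass by an annulus, and it works precisely because \emph{both} $\us_N$ and $\us$ are equilibria, so the cut-off residuals vanish. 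These two lemmas feed an iteration on $\omega(r):=\|De_N\|_{\ell^\infty(\L_N\setminus\L_r)}$ that absorbs the self-referential term once $r$ exceeds a fixed threshold, yielding $\omega(r)\lesssim N^{-d}$; the finite core is then handled by one more application of Caccioppoli.

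Your route instead requires uniform-in-$N$ invertibility of $\delta^2\E_N(\tilde u_N)$ as a map on an $\ell^\infty$-type space, so that a Banach fixed point can be closed there. This is strictly stronger than the $\ell^2$ inf-sup condition and is not a free consequence of it; proving it amounts to an $\ell^\infty$ Green's-function estimate for the \emph{defect} (not homogeneous) periodic Hessian, which is exactly the hard object the paper avoids by first passing through $\ell^2$. Your sentence ``absorbing it using inf-sup stability together with a discrete Caccioppoli inequality'' hides this: Caccioppoli converts $\ell^2$ on balls to $\ell^2$ on annuli, not $\ell^2$ to $\ell^\infty$, and the inf-sup condition you cite is only in $\ell^2$. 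So either you must prove the defect $\ell^\infty$ resolvent bound directly (feasible but substantially more work than what the paper does), or reorganise along the paper's lines: get $\us_N$ in $\ell^2$ first, then bootstrap. A minor point: the energy-norm residual of $T_N^{\rm per}\us$ is $O(N^{-d/2})$, not $O(N^{1-d})$; your figure is too optimistic for $d\geq 3$.
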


\begin{remark} \label{rem:general p}
  Since $\# \L_N \approx N^d$, applying Hölder's inequality to
  Theorem~\ref{th:main theorem} we obtain, for $p' = p / (p-1)$,
  $
    \| D\bar{u}_N - D\bar{u} \|_{\ell^p(\Lambda_N)}
    \leq C
    N^{-d/p'}.
  $
\end{remark}

\subsection{Numerical Tests}
\label{sec:numerical tests}
We implemented two numerical tests to confirm our analysis:
\begin{enumerate}
\item A vacancy in bulk W (bcc crystal structure), with interaction modelled
  by a Finnis-Sinclair (embedded atom) potential \cite{FSWang2013}.
\item A self-interstitial in bulk Cu (fcc crystal structure), with interaction
modelled by Morse pair-potential $\phi(r) = (e^{-2\alpha (r-1)} - 2 e^{-\alpha
(r-1)})\phi_{\rm cut}(r)$ with stiffness parameter $\alpha = 4$ and cubic spline
cut-off $\phi_{\rm cut}$ on the interval $[1.5, 2.3]$.
\end{enumerate}

In both cases, we choose a cubic computational cell: given the lattice
parameter $a_0$ (side-length of the unit cell in equilibrium) the matrix $\mB$
in \S~\ref{sec:results:supercell} is given by $\mB = a_0 I$.
The resulting equilibration problem \eqref{eq:results:supercellapprox} is then
solved using a preconditioned nonlinear conjugate gradient
algorithm~\cite{2016-precon1}. To estimate the error a numerical comparison
solution was computed with $N = \lceil 2.5 N_{\rm max} \rceil$, where
$N_{\rm max}$ denotes the largest $N$ chosen for the test.

\begin{figure}
  \centering
  \includegraphics[width=0.8\textwidth]{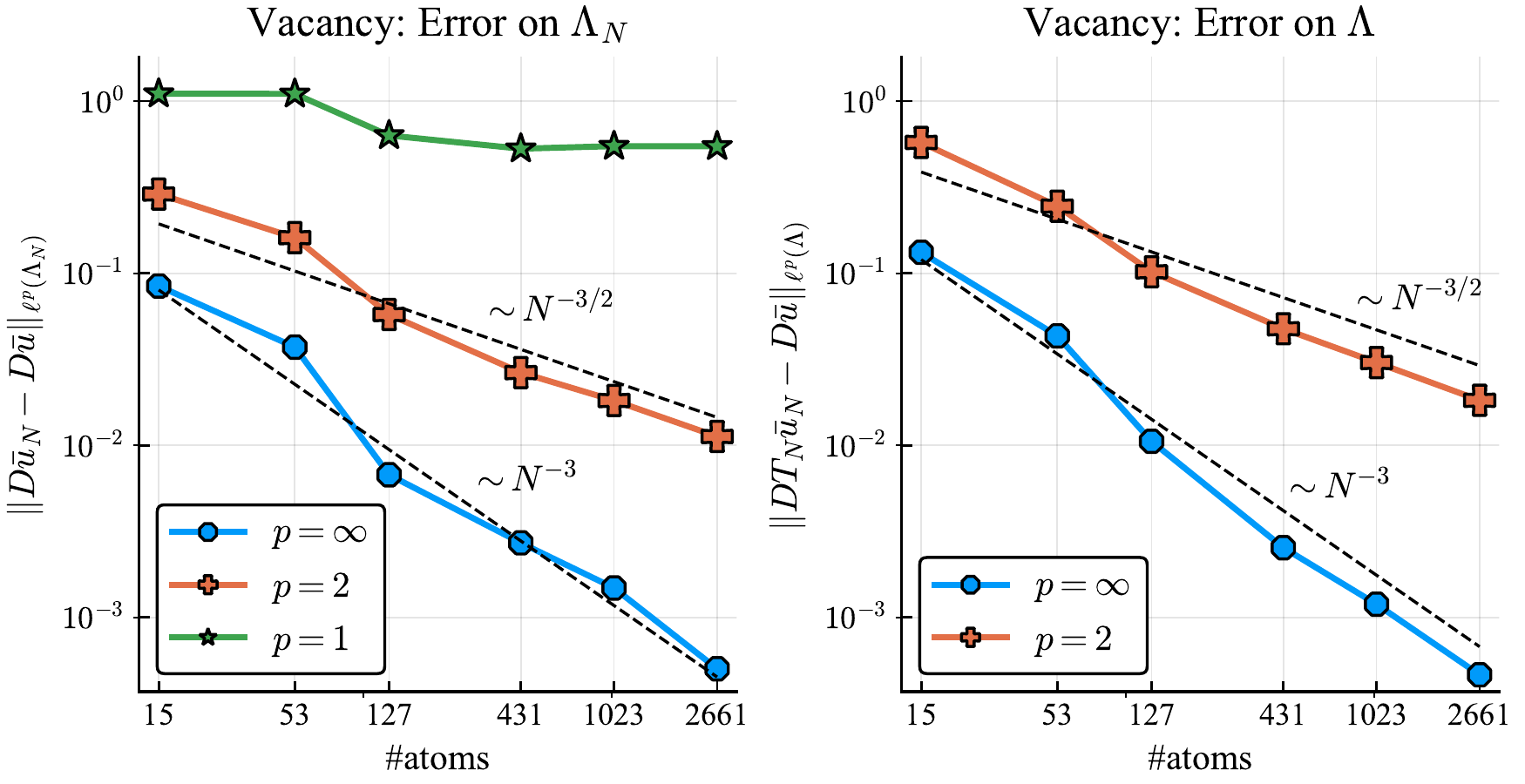}
  \caption{ \label{fig:errvacancy}
    Numerical confirmation of the convergence rates predicted by
    Theorem~\ref{th:main theorem}: vacancy in bulk W (bcc), under EAM
    interaction.
  }
\end{figure}

\begin{figure}
  \centering
  \includegraphics[width=0.8\textwidth]{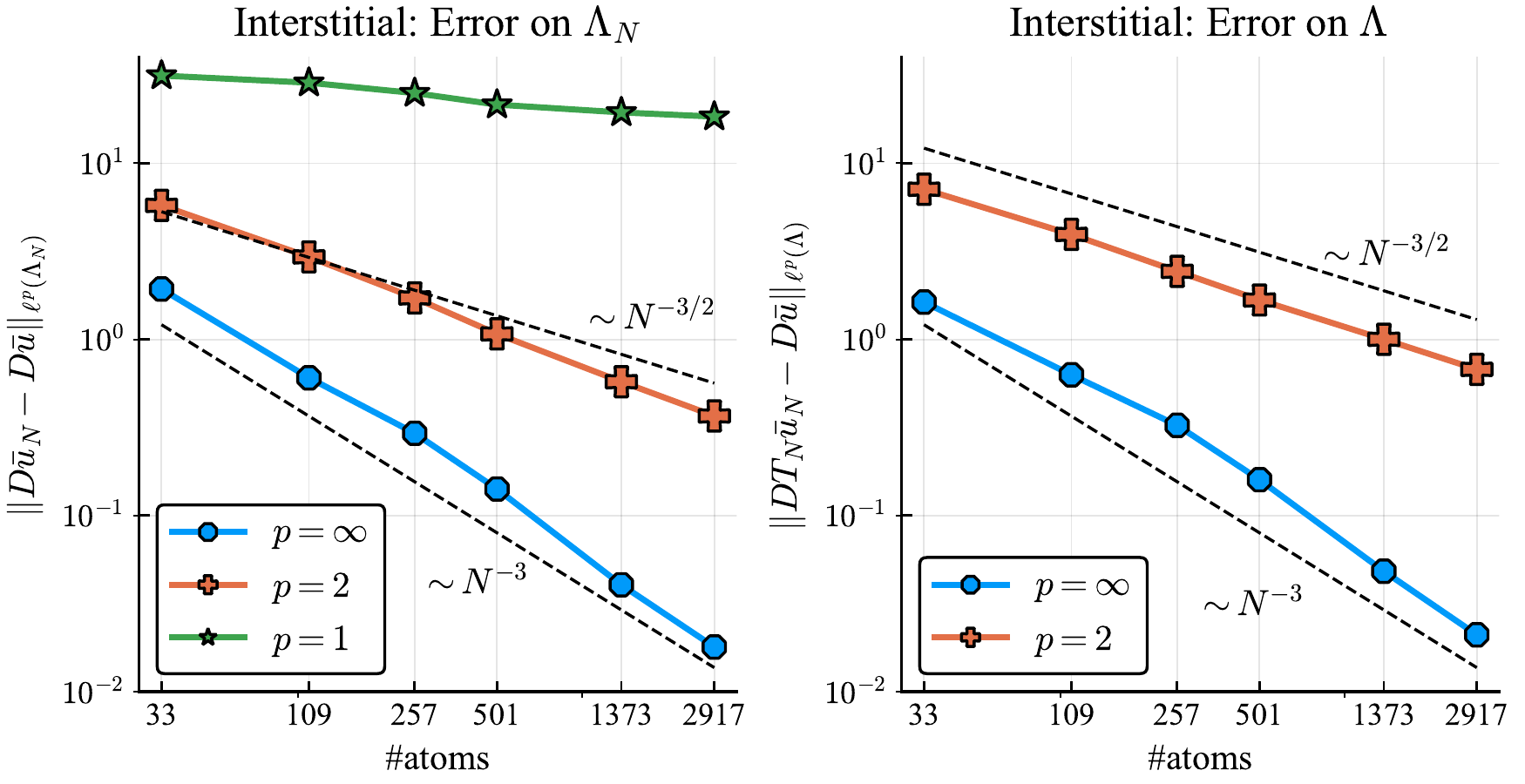}
  \caption{ \label{fig:errinter}
    Numerical confirmation of the convergence rates predicted by
    Theorem~\ref{th:main theorem}: interstitial in bulk Cu (fcc), under Morse
    interaction.
  }
\end{figure}

The results are shown in Figures~\ref{fig:errvacancy} and~\ref{fig:errinter}.
Although in both cases there is a mild pre-asymptotic behaviour visible, the
numerical errors follow closely the predicted rates. Note that we did not plot the errors on $\L$ with respect to the
$\|D\cdot\|_{\ell^1}$-seminorm since they are theoretically infinite but in
practise due to the finite domain of the comparison solution appear to converge
very slowly.

\subsection{Conclusion}
\label{sec:conclusion}
We haven given the first rigorous proofs of a sharp error estimate in the maximum norm
(for strains) for the relaxation of a crystalline defect under artificial
far-field boundary conditions.

Our restriction to point defects with periodic boundary conditions simplified
one key aspect of the analysis: the sharp error estimates for the Green's
function. Indeed, there are three fundamental ingredients in our analysis: (1)
an inf-sup condition which allowed us to treat general equilibria instead of
only minima; (2) a sharp error estimate for the Green's function; and (3) a
Caccioppoli estimate. Our arguments for (1) and (3) seem to be
 generic and can likely be generalised to other situations, in particular to
clamped boundary conditions for either point defects of dislocations.
Extending our error estimate for the Green's function is likely difficult in
general. However, whenever this can be achieved our results should be readily
extendable.

\section{Proofs}
\label{sec:proofs}
\def\vN{v_N}
\def\vNper{v_N^{\rm per}}
In \S\S~\ref{sec:proofs:aux}--\ref{sec:proofs:GN}, we establish auxiliary results,
mostly adapting existing ideas to our setting. The proof of inf-sup stability of
the periodic supercell approximation is given in \S~\ref{sec:infsup-proof}, and
the proof of the sharp uniform convergence estimate in \S~\ref{sec:uniform}.

\subsection{Auxiliary results}
\label{sec:proofs:aux}
An important technical tool that was used in \cite{EOS2016} for the error
analysis of the supercell approximation was a set of operators that enable us to
convert functions defined in $\L$ to functions defined on $\L_N$, and
vice-versa. The following results and their proofs are similar to those in
\cite{EOS2016}.

Let $Q_R :=  \mB (-R, R]^d$ and $\Lambda_R := \L \cap Q_R$ for any $R \in \N$. For general $R > 0$ we define $Q_R := Q_{\lceil R \rceil}$ and $\L_R
= \L_{\lceil R \rceil}$.

\begin{lemma}[Discrete Poincaré inequality] \label{lem:poincare}
There exist $r_{\rm P}, R_{\rm P}, C_{\rm P}>0$ such that for all $0<R_1<R_2$ with $R_1 \geq r_{\rm P}$, $R_2 - R_1 \geq r_{\rm P}$, $2 \leq p \leq \infty$,
 and $u : \L_{R_2+R_{\rm P}} \to \R^m$ we have
\begin{align} \label{eq:poincare}
  &\big\lVert u - \<u\>_{\L_{R_2} \setminus \L_{R_1}}
    \big\rVert_{\ell^p(\Lambda_{R_2} \setminus \Lambda_{R_1})}
  \leq R_2 C_{\rm P}
    \lVert Du \rVert_{\ell^p(\Lambda_{R_2 + R_{\rm P}} \setminus \Lambda_{R_1 - R_{\rm P}})} \\
  \label{eq:poincareaverage}
  & \text{with} \qquad \<u\>_{\L'} = \frac{1}{\lvert \Lambda' \rvert} \sum_{\ell \in \Lambda'} u(\ell).
\end{align}
\end{lemma}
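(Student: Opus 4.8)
The plan is to reduce the discrete Poincar\'e inequality on the annular region $\Lambda_{R_2} \setminus \Lambda_{R_1}$ to a combination of (i) a standard Poincar\'e inequality on a fixed-size reference domain, transported by scaling, and (ii) a chaining/telescoping argument that controls how the local averages drift across overlapping cells. First I would fix a reference scale: choose $r_{\rm P}$ large enough that on any ``unit annulus'' (a region of the lattice of diameter comparable to $r_{\rm P}$, intersected with $\L$) the graph is connected, spans $\R^d$, and a discrete Poincar\'e inequality with a universal constant holds. For $|\ell|$ large the local geometry is exactly that of $\mA\Z^d$, so away from the defect core this is just the standard lattice Poincar\'e inequality; near the core we only need finitely many configurations, so a crude compactness/finite-dimensionality argument gives a constant there as well. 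The role of the enlargement $R_{\rm P}$ on the right-hand side is precisely to guarantee that every finite-difference stencil $D_\rho u(\ell)$ needed in the estimate involves only points in $\L_{R_2 + R_{\rm P}}$, so that $\|Du\|$ on the stated slightly larger annulus makes sense and dominates.

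The main construction is a covering of the annulus $Q_{R_2}\setminus Q_{R_1}$ (in the scaled coordinates $\mB^{-1}$ this is a Euclidean annulus with inner radius $\gtrsim R_1$ and outer radius $\lesssim R_2$) by $\mathcal{O}((R_2/r_{\rm P})^d)$ balls $B_j$ of radius $\asymp r_{\rm P}$, arranged so that (a) each $B_j \cap \L$ has the Poincar\'e property above, (b) consecutive balls in the covering overlap in a set containing $\gtrsim r_{\rm P}^d$ lattice points, and (c) the ``overlap graph'' on the $B_j$ is connected with combinatorial diameter $\mathcal{O}(R_2/r_{\rm P})$ — this is where the factor $R_2$ in the final bound comes from, since we may have to chain across $\mathcal{O}(R_2/r_{\rm P})$ cells and each hop costs one local Poincar\'e constant times $r_{\rm P}$. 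Writing $a_j := \<u\>_{B_j \cap \L}$ for the local averages, the local Poincar\'e inequality gives $\|u - a_j\|_{\ell^p(B_j \cap \L)} \le C r_{\rm P} \|Du\|_{\ell^p((B_j)^+ \cap \L)}$, where $(B_j)^+$ is $B_j$ fattened by $\rcut$; and on an overlap $B_j \cap B_k$ one gets $|a_j - a_k| \lesssim r_{\rm P}^{-d/p}(\|u-a_j\|_{\ell^p(B_j\cap B_k)} + \|u-a_k\|_{\ell^p(B_j\cap B_k)}) \lesssim r_{\rm P}^{1-d/p}(\|Du\|_{\ell^p((B_j)^+)} + \|Du\|_{\ell^p((B_k)^+)})$. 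Then for any fixed reference cell $B_0$ and any $j$, telescoping along a shortest chain from $B_0$ to $B_j$ (length $\mathcal{O}(R_2/r_{\rm P})$) bounds $|a_j - a_0|$; summing the $p$-th powers over all $j$, using bounded overlap of the fattened cells (so that each $\|Du\|_{\ell^p((B_j)^+)}^p$ is counted $\mathcal{O}(1)$ times per chain and the number of chains through a fixed cell is controlled) yields $\sum_j |a_j - a_0|^p |B_j \cap \L| \lesssim (R_2 C_{\rm P})^p \|Du\|_{\ell^p(\text{fat annulus})}^p$.

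Finally I would pass from the reference average $a_0$ to the true annular average $\<u\>_{\L_{R_2}\setminus\L_{R_1}}$: the difference $a_0 - \<u\>_{\L_{R_2}\setminus\L_{R_1}}$ is itself an average of the $u(\ell) - a_0$, hence controlled by $\|u - a_0\|_{\ell^1}/|\Lambda_{R_2}\setminus\Lambda_{R_1}|$, and combining the two estimates above with the triangle inequality $\|u - \<u\>\|_{\ell^p} \le \|u - a_0\|_{\ell^p} + |a_0 - \<u\>|\,|\Lambda_{R_2}\setminus\Lambda_{R_1}|^{1/p}$ and H\"older to handle the $\ell^1$-vs-$\ell^p$ discrepancy closes the argument; all constants are absorbed into $C_{\rm P}$, which depends only on $d$, $\mA$, $\rcut$, and the finitely many core configurations, hence is independent of $R_1, R_2, p$. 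The main obstacle is the bookkeeping in the chaining step: one must choose the covering so that the fattened cells $(B_j)^+$ have uniformly bounded overlap \emph{and} the chains used in the telescoping do not pass through any single cell too many times, so that the sum over $j$ of the chained error terms recovers exactly one copy (up to a constant) of $\|Du\|_{\ell^p}^p$ on the fat annulus rather than an $R_2$-dependent multiple of it; getting the $p$-uniformity (in particular the clean behaviour as $p\to\infty$) requires being slightly careful with the discrete H\"older steps, but these are standard once the geometric covering is set up correctly. Handling the defect core uniformly — ensuring the inner radius condition $R_1 \ge r_{\rm P}$ is enough to stay in the region where either the homogeneous estimate applies or only finitely many local patterns occur — is a minor but necessary point.
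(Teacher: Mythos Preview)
Your chaining strategy with uniform-size cells and a single reference cell $B_0$ cannot deliver the sharp constant $R_2$ for $p<\infty$ in dimension $d\geq 2$. The issue is precisely the one you flag as the ``main obstacle'' but then dismiss as standard bookkeeping: every chain terminates at $B_0$, so the $O(1)$ overlap-graph edges incident to $B_0$ must together carry all $\sim (R_2/r_{\rm P})^d$ chains, and hence the multiplicity $M$ (the maximum number of chains using any fixed edge) satisfies $M\gtrsim (R_2/r_{\rm P})^d$, not $M\lesssim R_2/r_{\rm P}$. Feeding this into your own estimate $\sum_j |a_j-a_0|^p\,|B_j\cap\L|\lesssim L^{p-1}M\,r_{\rm P}^{p}\|Du\|_{\ell^p}^p$ with $L=R_2/r_{\rm P}$ gives only $\|u-a_0\|_{\ell^p}\lesssim R_2\,(R_2/r_{\rm P})^{(d-1)/p}\|Du\|_{\ell^p}$, which recovers the stated bound solely at $p=\infty$. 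This is exactly why Poincar\'e inequalities on John-type domains are proved via Whitney decompositions and Boman chains (or the maximal function) rather than via uniform covers chained to a single point; with your cover, the concentration of chains near $B_0$ is unavoidable by an elementary flow-counting argument, so no clever choice of paths rescues the constant.

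The paper sidesteps the difficulty entirely and takes a much shorter route: it quotes an existing $p=2$ inequality (whose proof goes through interpolation of the lattice function to a continuous one and rescaling to a unit annulus), observes that the $p=\infty$ case is elementary since any two points of the annulus are joined by a lattice path of length $\lesssim R_2$, and then obtains all $2<p<\infty$ by Riesz--Thorin interpolation applied to the linear map $u\mapsto u-\langle u\rangle_{\L_{R_2}\setminus\L_{R_1}}$. If you want a self-contained argument, the interpolation route is by far the cleanest; if you insist on a direct chaining proof valid for all $p$ simultaneously, you would need a genuine Whitney-type cover together with the Boman chain condition, which is substantially more machinery than you indicate.
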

\begin{proof}
  The restriction $R_1 \geq r_{\rm P}$ ensures that the defect region can be
  ignored. On can then apply \cite[Lemma 7.1]{EOS-preprint} and its proof
  verbatim to cubes instead of balls, which states that there exists $\tilde{a}$
  such that
  \[
    \|u - \tilde{a} \|_{\ell^2} \lesssim R_2 \lVert Du \rVert_{\Lambda_{R_2 + R_{\rm P}} \setminus \Lambda_{R_1 - R_{\rm P}}}.
  \]
    Since $\tilde{a} = \<u\>_{\L_{R_2}\setminus \L_{R_1}}$
  minimises the left-hand side, the stated result for $p = 2$ follows.

  For $p = \infty$, the result is elementary. For $2 < p < \infty$ it
  follows from the Riesz-Thorin interpolation theorem.
\end{proof}

Let $\eta_R \in C^2(\R^d; [0, 1])$ be a cut-off function satisfying
\begin{itemize}
\item $\eta_R(x) = 1$ for $x \in Q_{4R/6}$,
\item $\eta_R(x) = 0$ for $x \in \R^d \setminus Q_{5R/6}$,
\item $\lvert \nabla^j \eta_R \rvert \leq C R^{-j}$ for $j=1,2$.
\end{itemize}
Let $A_R := \L_{5R/6+\rcut} \setminus \L_{4R/6-\rcut}$ be a lattice annulus,
then, for $u: \L_R \to \R^m$ and $R \geq R_{\rm T} := \max\{2r_{\rm P}, 6 R_{\rm
P} + 6 \rcut\}$ we can define the truncation $T_R u \in \Wc$ by
\begin{equation*}
  T_R u(\ell)  :=
  \begin{cases}
    \eta_R \big(u- \<u\>_{A_R}\big), & \ell \in \L_R, \\
    0, & \text{otherwise.}
  \end{cases}
\end{equation*}
For $R \leq N$ we can extend $T_R u$ periodically with respect to $\Lambda_N$,
in which case we call it $T_{N,R}^{\rm per} u \in \Wper_N$. Moreover, we set
$T_N^{\rm per} := T_{N,N}^{\rm per}$. The following Lemma, while formulated in
terms of $T_R$ may also be applied to $T_{N,R}^{\rm per}$ and $T_{N}^{\rm per}$.

\begin{lemma} \label{th:TR_estimates}
There exists $C>0$ such that, for $R$ sufficiently large, $2 \leq p \leq \infty$,
 $u :\L_R \to \R^m$,
\begin{align}
    \label{eq:TR_estimates:global}
\lVert D T_R u \rVert_{\ell^p} &\leq C \lVert D u \rVert_{\ell^p(\Lambda_R)},\\
\label{eq:TR_estimates:err1}
\lVert D T_R u -Du \rVert_{\ell^p(\Lambda_R)}  &\leq C \lVert D u \rVert_{\ell^p(\Lambda_R \setminus \Lambda_{R/2})}, \qquad \text{and} \\
\label{eq:TR_estimates:err2}
\lVert D^2 T_R u -D^2 u \rVert_{\ell^p(\Lambda_R)}  &\leq C \lVert D^2 u \rVert_{\ell^p(\Lambda_R \setminus \Lambda_{R/2})}
  + C R^{-1} \| Du\|_{\ell^2(\L_R \setminus \L_{R/2})}.
\end{align}
\end{lemma}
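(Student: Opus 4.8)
The plan is to expand $DT_R u$ and $D^2 T_R u$ by the discrete product rule and isolate the terms in which a finite difference hits the cut-off $\eta_R$. Write $w := u - \<u\>_{A_R}$, so that on $\L_R$ we have $T_R u = \eta_R w$ (and $T_R u = 0$ elsewhere), and note $D^j\<u\>_{A_R} = 0$, hence $D^j w = D^j u$ for $j=1,2$. The Leibniz rule gives, for $\rho\in\Rc_\ell$,
\[
  D_\rho(T_R u)(\ell) = \eta_R(\ell+\rho)\,D_\rho u(\ell) + D_\rho\eta_R(\ell)\,w(\ell),
\]
and iterating it once more expresses $D^2 T_R u$ as a finite sum (with shifted arguments) of terms of the three types $\eta_R\,D^2u$, $D\eta_R\,Du$, $D^2\eta_R\,w$. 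Since $\eta_R\equiv1$ on $Q_{4R/6}$ and $\eta_R\equiv0$ off $Q_{5R/6}$, any finite difference of $\eta_R$ is supported in the lattice annulus $A_R = \L_{5R/6+\rcut}\setminus\L_{4R/6-\rcut}$, which for $R$ large satisfies $A_R\subset\L_R\setminus\L_{R/2}$, and the ``main'' terms $\eta_R\,D^j u$ agree with $D^j u$ off $A_R$. There are no boundary contributions because $\eta_R$ vanishes within distance $\rcut$ of $\partial Q_R$, so $D_\rho(T_R u)(\ell)$ and $D^2(T_R u)(\ell)$ vanish as soon as one of $\ell,\ell+\rho,\dots$ leaves $\L_R$; the periodic versions $T^{\rm per}_{N,R}u$, $T^{\rm per}_N u$ are treated identically, periodicity only suppressing such terms.

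Next I would estimate the commutator terms. From $|\nabla^j\eta_R|\leq CR^{-j}$ and the stencils having radius $\leq\rcut$ one gets $|D^j\eta_R|\leq CR^{-j}$, $j=1,2$, so the $D\eta_R\,Du$ terms are bounded by $CR^{-1}\|Du\|_{\ell^p(\Lambda_R\setminus\Lambda_{R/2})}$. For the terms carrying the factor $w$ I apply the discrete Poincaré inequality, Lemma~\ref{lem:poincare}, on $A_R$, i.e. with $R_1 = 4R/6-\rcut$ and $R_2 = 5R/6+\rcut$: for $R$ large the hypotheses $R_1\geq r_{\rm P}$ and $R_2-R_1 = R/6+2\rcut\geq r_{\rm P}$ hold, and the constant subtracted in the definition of $T_R u$ is exactly the average $\<u\>_{\L_{R_2}\setminus\L_{R_1}}$ appearing in \eqref{eq:poincare}, so $\|w\|_{\ell^p(A_R)}\leq CR\,\|Du\|_{\ell^p(\Lambda_R\setminus\Lambda_{R/2})}$ — here I also use that the enlarged annulus $\L_{R_2+R_{\rm P}}\setminus\L_{R_1-R_{\rm P}}$ still lies in $\L_R\setminus\L_{R/2}$ for $R$ large. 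Combining the two factors, the $D\eta_R\,w$ terms are controlled by $C\|Du\|_{\ell^p(\Lambda_R\setminus\Lambda_{R/2})}$ and the $D^2\eta_R\,w$ terms by $CR^{-1}\|Du\|_{\ell^p(\Lambda_R\setminus\Lambda_{R/2})}$.

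Assembling the pieces: \eqref{eq:TR_estimates:global} follows by the triangle inequality, using $\|\eta_R(\cdot+\rho)\,D_\rho u\|_{\ell^p}\leq C\|Du\|_{\ell^p(\Lambda_R)}$ (the support of $\eta_R$ lies in $Q_R$) together with the bound on $D\eta_R\,w$; \eqref{eq:TR_estimates:err1} follows from $D_\rho T_R u - D_\rho u = (\eta_R(\cdot+\rho)-1)D_\rho u + D_\rho\eta_R\,w$, both terms being supported in and bounded by $C\|Du\|_{\ell^p(\Lambda_R\setminus\Lambda_{R/2})}$; and \eqref{eq:TR_estimates:err2} follows by writing $D^2 T_R u - D^2 u$ as (a shifted copy of $\eta_R$ minus $1$) times $D^2 u$, plus the $D\eta_R\,Du$ and $D^2\eta_R\,w$ terms, with the first group bounded by $C\|D^2 u\|_{\ell^p(\Lambda_R\setminus\Lambda_{R/2})}$ and the other two by $CR^{-1}\|Du\|_{\ell^p(\Lambda_R\setminus\Lambda_{R/2})}\leq CR^{-1}\|Du\|_{\ell^2(\L_R\setminus\L_{R/2})}$, the last step using $\|\cdot\|_{\ell^p}\leq\|\cdot\|_{\ell^2}$ for $p\geq2$ on the finite index set.

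I do not expect a genuine obstacle here: the work is entirely bookkeeping of the nested lattice annuli and the observation that the constant in $T_R u$ is precisely the one for which Lemma~\ref{lem:poincare} applies, so that the $R^{-j}$ decay of $\nabla^j\eta_R$ combines with the factor $R$ from Poincaré to leave the claimed powers of $R$. This is consistent with the paper's remark that the result merely adapts standard truncation estimates.
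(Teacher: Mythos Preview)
Your proposal is correct and follows essentially the same approach as the paper: expand $D_\rho T_R u$ by the discrete product rule, use $|D^j\eta_R|\lesssim R^{-j}$ together with the Poincar\'e inequality of Lemma~\ref{lem:poincare} on the annulus $A_R$ to control the commutator terms, and observe that all such terms are supported in $\L_R\setminus\L_{R/2}$. The paper's proof is terser (it only writes out \eqref{eq:TR_estimates:global} and \eqref{eq:TR_estimates:err1} and declares \eqref{eq:TR_estimates:err2} ``analogous''), but your added bookkeeping for the second-order estimate and the final $\ell^p\leq\ell^2$ step are exactly what that analogy entails.
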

\begin{proof}
Since
\[ D_\rho T_R u(\ell)  = \eta_R(\ell+\rho) D_\rho u(\ell)
    + D_\rho \eta_R(\ell) \big(u(\ell)-\<u\>_{A_R}\big),\]
we can use Lemma \ref{lem:poincare} and $R \geq R_{\rm T}$ to see that
\begin{align*}
\lVert D T_R u \rVert_{\ell^p} &\lesssim \lVert D u \rVert_{\ell^p(\Lambda_R)}
    + \frac{1}{R} \lVert u - \<u\>_{A_R} \rVert_{\ell^p(A_R)}
\lesssim \lVert D u \rVert_{\ell^p(\Lambda_R)},
\end{align*}
and
\begin{align*}
\lVert D T_R u -Du \rVert_{\ell^p(\Lambda_R)} &\lesssim \lVert D u \rVert_{\ell^p(\Lambda_R \setminus \Lambda_{R/2})} + \frac{1}{R} \lVert u - \<u\>_{A_R} \rVert_{\ell^p(A_R)}
\lesssim \lVert D u \rVert_{\ell^p(\Lambda_R \setminus \Lambda_{R/2})}.
\end{align*}
This establishes \eqref{eq:TR_estimates:global} and
\eqref{eq:TR_estimates:err1}. The proof of \eqref{eq:TR_estimates:err2} is
analogous.
\end{proof}

As an immediate corollary of Lemma~\ref{th:TR_estimates} we obtain pointwise
estimates on $T_R \us$.

\begin{corollary} \label{th:decay_TN_ubar}
  Let $\us \in \Wi$ be a solution to \eqref{eq:equil}, then there exists
  $C > 0$ such that for all $R > 0, \ell \in \L$ and $j = 1, 2$,
  \begin{equation} \label{eq:decay_TN_ubar}
      \big| D^j T_R \us(\ell) | \leq C (1+|\ell|)^{1-d-j}.
  \end{equation}
\end{corollary}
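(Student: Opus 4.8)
The plan is to start from the identity $T_R\us(\ell)=\eta_R(\ell)\big(\us(\ell)-\<\us\>_{A_R}\big)$, which is valid for every $\ell\in\L$ (for $\ell\notin\L_R$ both sides vanish, since then $\ell\notin Q_{5R/6}$ and so $\eta_R(\ell)=0$), and to expand $D^j T_R\us$, $j=1,2$, by the discrete product rule. For $j=1$ this is exactly the formula $D_\rho T_R\us(\ell)=\eta_R(\ell+\rho)\,D_\rho\us(\ell)+D_\rho\eta_R(\ell)\big(\us(\ell)-\<\us\>_{A_R}\big)$ already used in the proof of Lemma~\ref{th:TR_estimates}, and $D^2 T_R\us$ expands analogously into a finite sum of terms of the schematic form $\big(D^a\eta_R\big)\big(D^b\us\big)$, evaluated on the stencil of $\ell$, with $a+b=j$ and the convention that ``$D^0\us$'' denotes the fluctuation $\us-\<\us\>_{A_R}$. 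It then suffices to bound each such term by $(1+|\ell|)^{1-d-j}$.

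For the term with $a=0$ (hence $b=j$) this bound is the decay estimate~\eqref{eq:decay_ubar} whenever $|\ell|$ is large, and it holds for the finitely many lattice sites in any fixed ball simply because $|D^j\us|$ is bounded there while $(1+|\ell|)^{1-d-j}$ is bounded below; both contingencies are absorbed into $C$. For a term with $a\ge1$, the key point is that $D^a\eta_R$ vanishes unless the stencil of $\ell$ meets $\overline{Q_{5R/6}}\setminus Q_{4R/6}$, i.e. unless $\ell$ lies within $O(\rcut)$ of that set, in which case $|\ell|\asymp R$; there $|D^a\eta_R|\lesssim R^{-a}$, the companion factor satisfies $|D^b\us|\lesssim R^{1-d-b}$ by~\eqref{eq:decay_ubar} when $b\ge1$, and when $b=0$ the fluctuation is $\lesssim R^{1-d}$. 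The latter bound I would obtain from the discrete Poincar\'e inequality (Lemma~\ref{lem:poincare}) with $p=\infty$, applied on the annulus $A_R$ with $R_1\approx 2R/3$ and $R_2\approx 5R/6$: it bounds $\|\us-\<\us\>_{A_R}\|_{\ell^\infty(A_R)}$ by $R\,\|D\us\|_{\ell^\infty}$ over a slightly larger annulus, on which $\|D\us\|_{\ell^\infty}\lesssim R^{-d}$ by~\eqref{eq:decay_ubar}. In every case the term is $O\big(R^{-a}R^{1-d-b}\big)=O\big(R^{1-d-j}\big)=O\big((1+|\ell|)^{1-d-j}\big)$, using $|\ell|\asymp R$. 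Summing the finitely many terms gives the claim for $|\ell|$ large, and for each of the finitely many remaining (small) values of $R$ for which $T_R$ is defined the support of $T_R\us$ is a fixed bounded set, so the estimate holds after enlarging $C$.

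I expect the main obstacle to be exactly this fluctuation bound $\|\us-\<\us\>_{A_R}\|_{\ell^\infty(A_R)}\lesssim R^{1-d}$: it is the only place where the strain decay~\eqref{eq:decay_ubar} does not by itself suffice and a Poincar\'e-type estimate is genuinely needed. Making it rigorous requires checking that the hypotheses $R_1\ge r_{\rm P}$ and $R_2-R_1\ge r_{\rm P}$ of Lemma~\ref{lem:poincare} hold (they do once $R$ is past a fixed threshold, because $A_R$ has inner radius and width both comparable to $R$) and that the enlarged annulus occurring in that lemma still lies at distance $\gtrsim R/2$ from the origin, so that the resulting power of $R$, namely $R^{-1}\cdot R^{1-d}=R^{-d}$, truly matches $(1+|\ell|)^{1-d-1}$. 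Everything else is bookkeeping of the finite-difference stencil radii (at most $2\rcut$) against the geometric constants $4/6$ and $5/6$ in the definitions of $\eta_R$ and $A_R$.
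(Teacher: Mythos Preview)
Your proposal is correct and follows essentially the same route as the paper: the paper simply invokes Lemma~\ref{th:TR_estimates} (whose proof \emph{is} the product-rule expansion you write out, combined with the Poincar\'e inequality) together with~\eqref{eq:decay_ubar}, whereas you unpack that expansion inline and bound each term pointwise. One minor slip: $R$ ranges over a continuum, not ``finitely many'' values, but for $R$ in the bounded interval $[R_{\rm T},R_0]$ the support of $DT_R\us$ lies in a fixed finite set and all quantities are uniformly bounded in $R$, so the conclusion for small $R$ follows as you intend.
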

\begin{proof}
  The case $j = 1$ is an immediate consequence of \eqref{eq:decay_ubar} and
  \eqref{eq:TR_estimates:err1}, case $p = \infty$. The case $j = 2$
  follows from \eqref{eq:decay_ubar} and \eqref{eq:TR_estimates:err2}.
\end{proof}

\subsection{The Homogeneous Problem}
\label{sec:hom}
The proof of the sharp uniform convergence rates requires sharp estimates on the
Green's function for the homogeneous supercell. In preparation for these, we
first introduce some notation to effectively translate between the defective and
homogeneous problems.

Recall from \S~\ref{sec:results} that $\Lhom = \mA \Z^d$, and analogously let
$\Lhom_N = Q_N \cap \Lhom$, then we define the associated potential energies
by
\[
    \E^{\rm hom}(u):= \sum_{\ell \in \Lhom} V(\Dh u(\ell)),
    \qquad \text{and} \qquad
    \E^{\rm hom}_N(u):= \sum_{\ell \in \Lhom_N} V(\Dh u(\ell)),
\]
for, respectively, $u \in \Wc(\Lhom)$ and $u \in \HH^{\rm per}_N(\Lhom_N)$. Of
course, $\E^{\rm hom}, \E_N^{\rm hom}$ have the same regularity properties as
$\E$, listed in \S\,\ref{sec:results:model}.

Moreover, phonon stability \eqref{eq:stabhom} can now be written as
\[
  \big\< \delta^2 \E^{\rm hom}(0)v,v \big\> \geq c_0 \lVert \Dh v \rVert_{\ell^2(\Lhom)}^2 \qquad \forall v \in \Wi(\Lhom).
\]
As a consequence there exists a {\em lattice Green's function}.
\begin{lemma} \label{th:LGF}
  There exists a {\em lattice Green's function} $\G : \Lhom \to
  \R^{m \times m}$ satisfying
  \begin{align}
    \notag
    &\big\< \delta^2 \E^{\rm hom}(0) (\G e_i),v \big\> = v_i(0)
    \qquad \forall v \in \Wc(\Lhom), \qquad \text{and} \\
    \label{eq:prfs:decay_G}
    &\lvert (\Dh)^j \G(\ell) \rvert \leq C_j
    (1+ \lvert \ell \rvert)^{2-d-j}
    \qquad \forall j \geq 1, \ell \in \Lhom.
  \end{align}
  Furthermore, $\G(\ell) = \G(-\ell)$ for all $\ell \in \Lhom$.
\end{lemma}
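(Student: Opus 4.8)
The plan is to construct $\G$ via the Fourier transform on the Brillouin zone and then extract pointwise decay from the analytic structure of the symbol. First I would write the linearised operator $\delta^2\E^{\rm hom}(0)$ as a translation-invariant finite-difference operator: for $w:\Lhom\to\R^m$ it has the form $(Lw)(\ell)=\sum_{\rho,\sigma\in\Rc}H_{\rho\sigma}\bigl(w(\ell+\rho-\sigma)-\dots\bigr)$, collecting the constant Hessian blocks $\nabla^2 V(0)$. Passing to the semidiscrete Fourier transform $\hat w(k)$, $k\in\Omhat:=\mathbb{A}^{-T}\mathbb{T}^d$ (the torus of side $2\pi$ transformed by the dual lattice), this operator becomes multiplication by a Hermitian symbol $\hat H(k)\in\R^{m\times m}$ which is a trigonometric polynomial in $k$ vanishing to second order at $k=0$. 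Phonon stability \eqref{eq:stabhom}, rewritten as $\<\delta^2\E^{\rm hom}(0)v,v\>\ge c_0\|\Dh v\|_{\ell^2}^2$, translates via Parseval into the lower bound $\hat H(k)\ge c_0\,|\widehat{\Dh}(k)|^2\,\mathrm{Id}\gtrsim c_0\,\min(|k|^2,1)\,\mathrm{Id}$ on $\Omhat$, so $\hat H(k)$ is invertible for $k\neq 0$ with $\|\hat H(k)^{-1}\|\lesssim |k|^{-2}$ near the origin. One then defines
\begin{equation*}
  \G(\ell) := \frac{1}{|\Omhat|}\int_{\Omhat} \hat H(k)^{-1}\, e^{\mathrm{i} k\cdot\ell}\,\dk,
\end{equation*}
which is well-defined since $|k|^{-2}$ is integrable in dimension $d\ge 2$; the defining identity $\<\delta^2\E^{\rm hom}(0)(\G e_i),v\>=v_i(0)$ for $v\in\Wc(\Lhom)$ is then immediate by Parseval, using that $\Dh v\in\ell^2$ so all manipulations are legitimate. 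The symmetry $\G(\ell)=\G(-\ell)$ follows from the point symmetry of $V$ (which forces $\hat H(k)=\hat H(-k)=\overline{\hat H(k)}$) and the fact that $\hat H(k)^{-1}$ is then real and even, so its Fourier transform is real and even.

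For the decay estimate \eqref{eq:prfs:decay_G}, the key step is that $\widehat{(\Dh)^j\G}(k) = p_j(k)\,\hat H(k)^{-1}$, where $p_j(k)$ is a product of $j$ factors each of the form $(e^{\mathrm{i} k\cdot\rho}-1)$, hence $|p_j(k)|\lesssim \min(|k|,1)^j$; thus the symbol of $(\Dh)^j\G$ is bounded by $C\min(|k|,1)^{j-2}$ near $k=0$ and is smooth and bounded away from $k=0$. The standard estimate for such oscillatory integrals — integrate $e^{\mathrm{i}k\cdot\ell}\,g(k)$ where $g$ is smooth on $\Omhat\setminus\{0\}$ with $|\partial^\alpha g(k)|\lesssim |k|^{j-2-|\alpha|}$ — gives $|(\Dh)^j\G(\ell)|\lesssim (1+|\ell|)^{-(d+(j-2))}=(1+|\ell|)^{2-d-j}$. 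Concretely I would split the integral into the region $|k|\le |\ell|^{-1}$ (where the integrand is bounded by $|k|^{j-2}$, contributing $|\ell|^{-(d+j-2)}$ after integrating in polar coordinates, valid since $d+j-2\ge d-1\ge 1>0$) and the region $|k|\ge |\ell|^{-1}$ (where one integrates by parts $\lceil d+j-1\rceil$ times in $k$, each integration by parts producing a factor $|\ell|^{-1}$ and at worst lowering the power of $|k|$ by one, and the boundary terms on the torus cancel by periodicity).

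The main obstacle is the regularity of $\hat H(k)^{-1}$ near $k=0$: $\hat H(k)$ vanishes to second order, so $\hat H(k)^{-1}$ is genuinely singular there, and one must verify that its derivatives satisfy the homogeneous-type bounds $|\partial^\alpha \hat H(k)^{-1}|\lesssim |k|^{-2-|\alpha|}$ uniformly — not just that it blows up like $|k|^{-2}$. This is where the analysis is most delicate and relies on $\hat H(k)$ being elliptic of order exactly $2$ in a quantitative sense: writing $\hat H(k)=\sum_{|\beta|=2}k^\beta C_\beta(k)$ with $C_\beta$ smooth and the principal part (at $k=0$) positive definite by \eqref{eq:stabhom}, one gets the resolvent bound via the Neumann series / adjugate formula and careful bookkeeping of which derivatives hit the $|k|^2$-scale versus the smooth remainder. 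Since the paper is adapting standard lattice Green's function technology (this is essentially the argument in \cite{EOS2016} and the references therein), I expect the write-up to cite the relevant prior construction and only indicate the modifications; the genuinely new content is minimal, so a short proof referring to \cite{EOS2016} for the detailed estimates is the realistic approach.
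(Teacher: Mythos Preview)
Your approach is essentially the same as the paper's: the paper simply cites \cite[Lemma~12]{EOS2016} for existence and the decay bounds and observes that the symmetry $\G(\ell)=\G(-\ell)$ follows from $\hat\G(k)=\hat\G(-k)$, and your sketch fleshes out exactly the Fourier construction underlying that citation, correctly anticipating that the write-up defers to prior work.

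One point needs care, however: your claim that the definition $\G(\ell)=|\Omhat|^{-1}\int_{\Omhat}\hat H(k)^{-1}e^{\mathrm{i}k\cdot\ell}\dk$ is well-defined ``since $|k|^{-2}$ is integrable in dimension $d\ge 2$'' is false for $d=2$, where $\int_{|k|<1}|k|^{-2}\dk$ diverges logarithmically. In two dimensions $\G$ is only determined up to an additive constant (it grows like $\log|\ell|$), and one must either fix a normalisation or, more cleanly, work directly with $\Dh\G$, whose symbol $p_1(k)\hat H(k)^{-1}=O(|k|^{-1})$ \emph{is} integrable in $d\ge 2$. This does not affect the defining variational identity (which only sees $\Dh\G$) or the decay estimates \eqref{eq:prfs:decay_G} for $j\ge 1$, but the integral you wrote for $\G$ itself does not converge when $d=2$.
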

\begin{proof}
   The two claims in \eqref{eq:prfs:decay_G} are proven in \cite[Lemma
   12]{EOS2016}. The point  symmetry of $\G$ is an immediate consequence of the
   fact that the Fourier symbol satisfies $\hat{\G}(k)=\hat{\G}(-k)$.
\end{proof}

To compare displacements $u$ of the homogeneous and the defect problem, we
define linear operators $S_N^{\rm hom} : \Wper_N(\L_N) \to \Wper_N(\Lhom_N)$
and $S_N^{\rm def} : \Wper_N(\Lhom_N) \to \Wper_N(\L_N)$ by fixing any
 $\ell_0 \in \L \setminus B_{\Rdef}$ and then letting
\begin{align*}
S_N^{\rm hom} u (\ell) &= \begin{cases}
        u(\ell), & \ell \in \Lhom \setminus B_{\Rdef}\\
        u(\ell_0), & \ell \in \Lhom \cap B_{\Rdef}
    \end{cases}
    \qquad \text{for } u \in \Wper_N(\L_N), \text{ and} \\
S_N^{\rm def} u (\ell) &= \begin{cases}
        u(\ell), & \ell \in \L \setminus  B_{\Rdef}\\
        u(\ell_0), & \ell \in \L \cap B_{\Rdef}
    \end{cases}  \qquad \text{for } u \in \Wper_N(\Lhom_N).
\end{align*}

In particular, we have the following lemma as an immediate consequence of these
definitions.

\begin{lemma} \label{th:properties_S}
  For some $\RS \geq \Rdef + \rcut$ sufficiently large, we have
    $\Dh S_N^{\rm hom} u =Du$ and $D S_N^{\rm def} u =\Dh u$ for $\lvert \ell \rvert > \Rdef + \rcut$ as well as the estimates
  \begin{align*}
    \lvert \Dh S_N^{\rm hom} u(\ell) \rvert &\leq C \lVert D u \rVert_{\ell^\infty(B_{\RS}\cap \L_N)} \qquad \forall \ell \in \Lhom \cap B_{\Rdef + \rcut}, \quad  \text{and} \\
    \lvert D S_N^{\rm def} u(\ell) \rvert &\leq C \lVert \Dh u \rVert_{\ell^\infty(B_{\RS}\cap \L_N^{\rm hom})} \qquad \forall \ell \in \L \cap B_{\Rdef + \rcut}.
  \end{align*}
\end{lemma}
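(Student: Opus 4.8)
The plan is to derive both assertions directly from property \textbf{(L)} and the connectivity of the interaction graph, treating $S_N^{\rm hom}$ in detail since $S_N^{\rm def}$ is entirely analogous.

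First I would establish the exact identities. If $\lvert\ell\rvert > \Rdef+\rcut$ then $\Rc_\ell = \Rc$ and, for every $\rho\in\Rc$, both $\ell$ and $\ell+\rho$ lie outside $B_{\Rdef}$, hence in $\L\setminus B_{\Rdef} = \Lhom\setminus B_{\Rdef}$ by \textbf{(L)}. On this common set the definition of $S_N^{\rm hom}$ reads $S_N^{\rm hom}u = u$ pointwise, so $D_\rho S_N^{\rm hom}u(\ell) = u(\ell+\rho)-u(\ell) = D_\rho u(\ell)$ for all $\rho\in\Rc$; that is, $\Dh S_N^{\rm hom}u(\ell) = Du(\ell)$. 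The identity $DS_N^{\rm def}u(\ell) = \Dh u(\ell)$ follows in exactly the same way. One only needs $N$ large enough that the fixed bounded region $B_{\Rdef+2\rcut}$ sits strictly inside one period $\mB(-N,N]^d$, so that evaluating periodic displacements and their differences there is unambiguous.

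Next I would prove the pointwise bounds. Fix $\ell\in\Lhom\cap B_{\Rdef+\rcut}$. Every component of $\Dh S_N^{\rm hom}u(\ell)$ is a difference $S_N^{\rm hom}u(\ell+\rho) - S_N^{\rm hom}u(\ell)$ with $\rho\in\Rc$, and by construction each of the two values is either $u(\ell_0)$ or the value of $u$ at some point of $\L_N\cap B_{\Rdef+2\rcut}$; so it suffices to bound $\lvert u(x)-u(y)\rvert$ for $x,y$ in the finite set $\{\ell_0\}\cup(\L_N\cap B_{\Rdef+2\rcut})$. Because $\L\cap B_{\Rdef}$ is finite and the graph with vertices $\L$ and edges $\{(\ell,\ell+\rho):\rho\in\Rc_\ell\}$ is connected, there is a radius $\RS\geq\Rdef+\rcut$ and an integer $L$, both depending only on the lattice geometry and not on $N$, such that any two points of $\L\cap B_{\Rdef+2\rcut}$ are joined inside $\L\cap B_{\RS}$ by an edge-path of length at most $L$. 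Telescoping $u$ along such a path and bounding each increment by $\lVert Du\rVert_{\ell^\infty(B_{\RS}\cap\L_N)}$ — which is legitimate once $N$ is large enough that $B_{\RS}\cap\L\subset\L_N$ — gives $\lvert u(x)-u(y)\rvert\leq L\,\lVert Du\rVert_{\ell^\infty(B_{\RS}\cap\L_N)}$, hence the asserted estimate for $\Dh S_N^{\rm hom}u(\ell)$ with $C$ absorbing $L$ and the dimensional constant relating componentwise and Euclidean norms.

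For $S_N^{\rm def}$ the same telescoping applies, now along edge-paths in the homogeneous lattice $\Lhom = \mA\Z^d$ (whose increments are then entries of $\Dh u$), and its interaction graph is connected since ${\rm span}\,\Rc = \R^d$; enlarging $\RS$ if needed handles both cases and produces the bound in terms of $\lVert\Dh u\rVert_{\ell^\infty(B_{\RS}\cap\L_N^{\rm hom})}$. I do not expect any real obstacle here: the only points requiring a little care are the uniform-in-$N$ choice of $\RS$ and $L$, which is precisely what the finiteness of the defect core in \textbf{(L)} supplies, and checking that for all sufficiently large $N$ the finitely many sites and connecting paths involved lie well within a single period of the supercell, so that working in $\Wper_N$ is the same as working on $\L_N$.
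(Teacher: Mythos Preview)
Your proposal is correct and is precisely the argument the paper has in mind: the paper states the lemma as ``an immediate consequence of these definitions'' without further proof, and your write-up simply makes explicit the identity outside $B_{\Rdef+\rcut}$ and the telescoping/connectivity argument inside, which is exactly the intended justification.
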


\begin{remark}
  The operators $S_N^{\rm def}$ and $S_N^{\rm hom}$ are not ``optimized''
  for practical purposes, which likely leads to poor constants in some of our
  estimates. However, we only use them as a technical tool in the proofs,
  and are only concerned with rates. For specific defect structures, more
  natural operators $S_N^{\rm def}, S_N^{\rm hom}$ are easily constructed.
\end{remark}

The definition and all properties in Lemma \ref{th:properties_S} directly translate to analogous operators $S^{\rm hom} : \Wi(\L) \to \Wi(\Lhom)$ and $S^{\rm def} : \Wi(\Lhom) \to \Wi(\L)$ as well.

\subsection{Periodic Green's Function}
\label{sec:proofs:GN}
We begin by recalling that phonon stability \eqref{eq:stabhom} also ensures the stability of the homogeneous periodic problem:
\begin{lemma} \label{lem:periodicstable}
  \cite[Thm. 3.6]{2012-M2AN-CBstab} For all $N > 0$ we have
\[  \big\< \delta^2 \E^{\rm hom}_N(0) v,v \big\>
    \geq c_0 \lVert \Dh v \rVert_{\ell^2(\Lhom_N)}^2 \qquad \forall v \in \HH^{\rm per}_N(\Lhom_N).
\]
\end{lemma}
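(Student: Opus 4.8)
The statement to prove is Lemma~\ref{lem:periodicstable}, asserting that phonon stability of the infinite homogeneous lattice transfers to the periodic supercell. Although the excerpt cites \cite[Thm.\,3.6]{2012-M2AN-CBstab}, let me sketch a self-contained argument.

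Key idea: both the infinite-lattice quadratic form and the periodic one diagonalize under Fourier/Bloch transform. For the infinite lattice $\Lhom=\mA\Z^d$, $\langle\delta^2\Ehom(0)v,v\rangle = \int_{\text{BZ}} \hat v(k)^* H(k)\hat v(k)\,dk$ where $H(k)$ is the dynamical matrix (Fourier symbol of $\delta^2\Ehom(0)$), and stability \eqref{eq:stabhom} is equivalent to $\hat v(k)^*H(k)\hat v(k)\ge c_0 |{\widehat{\Dh v}}(k)|^2$ for a.e. $k$, i.e., $H(k)\ge c_0\, M(k)$ where $M(k)$ is the symbol of $(\Dh)^*\Dh$. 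For the periodic cell $\Lhom_N$, periodic functions decompose over the *discrete* set of Bloch wavevectors $k\in K_N := \{2\pi(\mB^{-T})\alpha/(2N): \alpha\in\Z^d\}\cap\text{BZ}$, and the quadratic form becomes $\sum_{k\in K_N} \hat v(k)^* H(k)\hat v(k)$ with the *same* symbol $H(k)$, because the finite-difference stencil $\Dh$ and the Hessian $\nabla^2V(0)$ are identical for both problems (the stencil fits inside the cell for $N$ large, so periodic wrap-around does not alter the local interaction). Since the pointwise inequality $H(k)\ge c_0 M(k)$ holds for every $k$ in the Brillouin zone, it holds in particular at the finitely many points $k\in K_N$, and summing gives $\langle\delta^2\Ehom_N(0)v,v\rangle\ge c_0\|\Dh v\|_{\ell^2(\Lhom_N)}^2$.

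The steps I would carry out, in order: (1) set up the discrete Bloch transform for $\Wper_N(\Lhom_N)$, identifying the dual lattice of admissible wavevectors and the Parseval identity; (2) compute the Fourier symbol of the finite-difference operator $\Dh$ and of the Hessian form, noting carefully that for $N$ large enough (so that $\rcut$ is smaller than the cell half-width) the periodic stencil agrees with the infinite one, so the symbol $H(k)$ is identical; (3) restate \eqref{eq:stabhom} in Fourier space as the operator inequality $H(k)\succeq c_0 M(k)$ on the continuous Brillouin zone; (4) evaluate at the discrete wavevectors and sum. A subtlety at small $N$: for $N$ below some threshold the cutoff $\rcut$ may exceed the cell size and the stencil genuinely wraps; the cleanest route is to state the conclusion for $N$ sufficiently large (as elsewhere in the paper), or to invoke the cited reference for all $N$.

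I expect the main obstacle to be bookkeeping rather than conceptual: correctly identifying the dual lattice $K_N$ and verifying that the zero mode ($k=0$) does not cause trouble. Indeed at $k=0$ the constant displacement gives $\widehat{\Dh v}(0)=0$ and also $\hat v(0)^*H(0)\hat v(0)=0$ since $H(0)$ annihilates constants (translation invariance), so the inequality holds trivially there and degenerate modes are consistently handled on both sides. One must also confirm that $\{\Dh v: v\in\Wper_N\}$ is exactly the set of periodic strains, so that no ``hidden'' modes are lost. Given this, the result is a direct transcription of \eqref{eq:stabhom} from integration over the Brillouin zone to summation over a sublattice of it, and since the citation \cite[Thm.\,3.6]{2012-M2AN-CBstab} already establishes exactly this, I would in the paper simply defer to that reference while noting the Fourier-analytic reason above.
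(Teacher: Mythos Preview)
The paper does not supply its own proof of this lemma; it simply cites \cite[Thm.\,3.6]{2012-M2AN-CBstab} and moves on. Your Fourier/Bloch-transform sketch is exactly the standard argument behind that cited result, so your proposal is correct and aligned with what the paper implicitly relies on.

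One small sharpening: your worry about stencil wrap-around at small $N$ is unnecessary. By definition a function $v\in\Wper_N(\Lhom_N)$ is a $2N\mB\Z^d$-periodic function on all of $\Lhom$, and $\Dh v(\ell)$ is computed from the values $v(\ell+\rho)$ of that periodic extension. There is no separate ``wrapped'' stencil distinct from the infinite-lattice one; the local quadratic form $\nabla^2 V(0)[\Dh v(\ell),\Dh v(\ell)]$ is literally the same expression in both problems, only summed over a fundamental cell in the periodic case. Hence the Fourier symbol at each $k\in K_N$ is exactly the infinite-lattice symbol $H(k)$ for \emph{every} $N$, and the pointwise inequality $H(k)\succeq c_0 M(k)$ transfers immediately, giving the lemma for all $N>0$ as stated rather than only for $N$ sufficiently large.
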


In particular, for every $f : \Lhom_N \to \R^m$ with $\sum_{\ell \in \Lhom_N}f(\ell) =0$,
there exists a unique $u \in \HH^{\rm per}_N(\Lhom_N)$ with
$\sum_{\ell \in \Lhom_N}u(\ell) =0$ such that
\[
  \big\< \delta^2 \E^{\rm hom}_N(0) u,v \big\> = (f, v)_{\ell^2}
  \qquad \forall v \in \HH^{\rm per}_N(\Lhom_N).
\]
The {\em periodic Green's function} $\G_N : \Lhom_N \to \R^{m \times m}$ is then
defined by the equation
\begin{align*}
\big\< \delta^2 \E^{\rm hom}_N(0) (\G_N e_i), v \big\>
  &= v_i(0) -
  \frac{1}{\lvert \Lhom_N \rvert}\sum_{\ell \in \Lhom_N}v_i(\ell)
  \qquad \forall v \in \HH^{\rm per}_N(\Lhom_N)\\
  &=: \big( \delta_0 e_i - \frac{1}{\lvert \Lhom_N \rvert} {\bf 1} e_i , v \big)_{\ell^2}
\end{align*}

To estimate the decay of $\G_N$ we relate $\G$ and $\G_N$.

\begin{lemma}   \label{th:error estimate FN}
For every $j \geq 1$ there exist constants $C_1, C_2$, independent of $N$, such that
   \begin{align*}
      \big\| (\Dh)^j \G - (\Dh)^j \G_N \big\|_{\ell^\infty(\Lhom_{N})} &\leq C_1 N^{2-d-j}, \qquad
            \text{and in particular} \\
      |(\Dh)^j \G_N(\ell)| &\leq C_2 (1+{\rm dist}(\ell, 2N\mB \Z^d))^{2-d-j} \qquad
      \forall\, \ell \in \Lhom.
   \end{align*}
\end{lemma}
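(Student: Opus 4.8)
The plan is to interpret the difference $w_N := \G_N e_i - \G e_i$ (or rather a periodized version of $\G e_i$) as the solution of a discrete elliptic problem with a controllably small right-hand side, and then to apply the decay estimates for the periodic Green's function in a localized form. First I would fix the index $i$ and write $g := \G e_i \in \Wi(\Lhom)$, which by Lemma~\ref{th:LGF} satisfies $|(\Dh)^j g(\ell)| \lesssim (1+|\ell|)^{2-d-j}$. The natural candidate for comparison is the truncation $T_N^{\rm per} g \in \Wper_N(\Lhom_N)$ from \S~\ref{sec:proofs:aux}: by Corollary-type reasoning (applying Lemma~\ref{th:TR_estimates} with $R = N$ together with the decay of $g$), $T_N^{\rm per} g$ agrees with $g$ on $\Lambda_{N/2}^{\rm hom}$ and its defect from being the exact periodic Green's function is supported in the annulus $\Lhom_N \setminus \Lhom_{N/2}$, where the strains of $g$ are already $O(N^{2-d-1})$ and decaying faster.

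Concretely, I would test $\delta^2\E^{\rm hom}_N(0)(T_N^{\rm per}g)$ against an arbitrary $v \in \Wper_N(\Lhom_N)$ and compare with the defining equation for $\G_N e_i$. Using that $\delta^2\E^{\rm hom}(0)(g) = \delta_0 e_i$ on compactly supported test functions, and that away from the truncation annulus $\Dh T_N^{\rm per}g = \Dh g$, one finds that the residual $r_N := \delta^2\E^{\rm hom}_N(0)(T_N^{\rm per}g - \G_N e_i)$ is, up to the harmless constant $|\Lhom_N|^{-1}\mathbf{1}e_i$ term, given by a finite-difference expression supported in $\Lhom_N \setminus \Lhom_{N/2}$ built from $\Dh T_N^{\rm per}g - \Dh g$ and $\Dh g$ itself. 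By Lemma~\ref{th:TR_estimates} \eqref{eq:TR_estimates:err1}--\eqref{eq:TR_estimates:err2} and the decay \eqref{eq:prfs:decay_G}, this residual has $\ell^2(\Lhom_N)$-norm bounded by $C N^{(2-d) } N^{d/2 - \text{(gain from decay)}}$; a careful bookkeeping of the annular sum gives $\|r_N\|_{\ell^2} \lesssim N^{2-d}$ times a volume-type factor, which after using the $\ell^2$-coercivity of Lemma~\ref{lem:periodicstable} yields $\|\Dh(T_N^{\rm per}g - \G_N e_i)\|_{\ell^2(\Lhom_N)} \lesssim N^{2-d}$ (one dimension of regularity of margin should be present; I would track exponents carefully here). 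Combining with $\|\Dh T_N^{\rm per}g - \Dh g\|_{\ell^2} \lesssim N^{2-d}$ from the annular decay gives the energy-norm bound $\|\Dh\G_N e_i - \Dh g\|_{\ell^2(\Lhom_N)} \lesssim N^{2-d}$.

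To upgrade from $\ell^2$ to $\ell^\infty$ and from $j=1$ to general $j$, I would use the standard Green's-function-against-Green's-function trick: for a fixed site $\ell_*$, $(\Dh)^j(\G_N e_i - g)(\ell_*)$ can be written as $\langle \delta^2\E^{\rm hom}_N(0)(\G_N e_i - g), (\Dh)^{j,*}\G_N e_{(\cdot)}(\cdot - \ell_*)\rangle$ plus boundary/truncation corrections, i.e. pair the residual equation with the (periodic) Green's function centred at $\ell_*$. Since the residual $r_N$ is supported in the far annulus where $|(\Dh)^{j}\G_N(\cdot-\ell_*)|$ is small (using either the already-known decay of $\G$ plus a bootstrap, or an a priori $N$-independent bound), and since $\sum |r_N| \lesssim N^{2-d}\cdot(\text{annulus count with decay weight})$, this contracts to $|(\Dh)^j\G_N(\ell) - (\Dh)^j\G(\ell)| \lesssim N^{2-d-j}$ uniformly in $\ell \in \Lhom_N$. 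The second claimed bound, $|(\Dh)^j\G_N(\ell)| \lesssim (1+\mathrm{dist}(\ell,2N\mB\Z^d))^{2-d-j}$, then follows by periodicity together with \eqref{eq:prfs:decay_G}: for $\ell$ within distance $N/2$ of a lattice copy of the origin the first estimate and the decay of $\G$ give it directly, while for $\ell$ with $\mathrm{dist}(\ell,2N\mB\Z^d) \gtrsim N$ the right-hand side is $\gtrsim N^{2-d-j}$ and the first estimate plus $|(\Dh)^j\G(\ell)| \lesssim N^{2-d-j}$ on that region again suffices.

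\textbf{Main obstacle.} The delicate point is the bootstrap/uniformity in the $\ell^\infty$ step: to bound $(\Dh)^j\G_N$ against itself one seemingly needs the decay of $\G_N$ that one is trying to prove, so the argument must be arranged either as an induction on scales (controlling $(\Dh)^j\G_N$ on dyadic annuli, absorbing the self-referential term because it comes with a small prefactor once $N$ is large), or by first establishing a crude $N$-independent bound $|(\Dh)^j\G_N(\ell)| \lesssim (1+\mathrm{dist}(\ell,2N\mB\Z^d))^{2-d-j}$ via the explicit Fourier representation of $\G_N$ on the supercell Brillouin zone and only then running the comparison to get the sharp difference estimate $C_1 N^{2-d-j}$. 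I would favour the Fourier route for the crude bound, since the symbol of $\delta^2\E^{\rm hom}_N(0)$ is the restriction of the continuous symbol to the dual lattice $\tfrac{\pi}{N}\mB^{-T}\Z^d \cap \mathrm{BZ}$, so the discrete sum defining $\G_N$ is a Riemann sum for the integral defining $\G$ and the same stationary-phase/decay estimates apply uniformly in $N$; the comparison argument above then supplies the quantitative rate.
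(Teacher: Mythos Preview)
Your approach is viable in outline but takes a genuinely different route from the paper's, and the paper's argument is considerably more direct. The paper does not truncate and compare residuals at all. Instead it exploits the exact \emph{periodization identity}
\[
   (\Dh)^3 \G_N(\ell) = \sum_{z \in \Z^d} (\Dh)^3 \G(\ell + 2N\mB z),
\]
which follows because both sides solve the same periodic equation with zero mean (the mean-zero property needs $j \geq 3$ so that $(\Dh)^j\G$ is summable). The difference estimate for $j \geq 3$ is then immediate from the decay \eqref{eq:prfs:decay_G} of the translated copies. For $j = 1,2$ the paper simply integrates down via the discrete Poincar\'e inequality \eqref{eq:proof:err est FN:poincare}, which reduces matters to controlling the averages $\langle (\Dh)^j\G\rangle_{\Lhom_N}$. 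For $j=2$ summation by parts suffices; for $j=1$ a naive summation by parts would produce a logarithm in $d=2$, and the paper removes it by invoking the point symmetry $\G(\ell) = \G(-\ell)$. There is no bootstrap and no Fourier analysis of $\G_N$ anywhere.

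What your approach buys is that it does not appear to need the point symmetry: once you write the residual $f = H(T_N g) - \delta_0 e_i$ in strong form, it is supported in the annulus with $|f| \lesssim N^{-d}$, and pairing against $\Dh\G_N(\cdot-\ell_*)$ gives the sharp rate by a straightforward convolution estimate. The price is exactly the obstacle you flag: you need the decay of $\G_N$ to run the pairing, so either a Fourier argument for a crude a~priori bound or a scale-induction is unavoidable. Both are doable but add real work that the paper's periodization-plus-Poincar\'e argument sidesteps entirely.

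Two smaller remarks. First, your $\ell^2$ bookkeeping is off: the energy-norm bound one gets from coercivity and the annular residual is $\|\Dh(T_N^{\rm per}g - \G_N e_i)\|_{\ell^2} \lesssim N^{1-d/2}$, not $N^{2-d}$, for $d \geq 3$ (you acknowledge the exponents need tracking, and indeed this step is not used in your $\ell^\infty$ argument anyway). Second, if you do pursue the Fourier route for the crude bound, note that the decay of $\G_N$ uniform in $N$ is not literally a Riemann-sum consequence of the decay of $\G$; it requires redoing the oscillatory-sum estimate on the discrete Brillouin zone, which is standard but should be stated as a separate lemma.
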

\begin{proof}
First, we note that
\begin{equation}\label{eq:SD3Geq0}
\sum_{\ell \in \Lhom} (\Dh)^j \G(\ell) =0 \qquad \forall j \geq 3,
\end{equation}
which is straightforward to prove due to the gradient structure. (For $j < 3$,
$(\Dh)^j \G(\ell)$ does not decay fast enough to even define this sum.)


Fix $i \in \{1,\dots,m\},\rho_1,\rho_2,\rho_3 \in \Rc$ and let
\[
    w(\ell):= \sum_{z \in \Z^d} D_{\rho_1}D_{\rho_2}D_{\rho_3} \G(\ell + 2N \mB z) e_i
    \qquad \forall \ell \in \Lhom.
\]
Due to the decay \eqref{eq:prfs:decay_G}, the sum exists. Moreover, $w$ is
$\Lhom_N$-periodic, satisfies
\[
    \big\< \delta^2 \E^{\rm hom}_N(0) w,v \big\>
    = (D_{\rho_1}D_{\rho_2}D_{\rho_3} \delta_0 e_i, v)_{\ell^2}
\]
and according to \eqref{eq:SD3Geq0} also $\sum_{\ell \in \Lhom_N} w(\ell) = 0$.

Since $D_{\rho_1}D_{\rho_2}D_{\rho_3} \G_N e_i$ solves the same equation and
has average zero as well, we can therefore deduce that
\begin{equation*}
  (\Dh)^3 \G_N = \sum_{z \in \Z^d} (\Dh)^3 \G(\ell + 2N \mB z).
\end{equation*}
Consequently, for $j \geq 3$ and $\ell \in \Lhom_N$,
\begin{align*}
\big\lvert (\Dh)^j \G(\ell) - (\Dh)^j \G_N(\ell) \big\rvert &= \Big\lvert \sum_{z \in \Z^d \setminus \{0\}} (\Dh)^j \G(\ell + 2N \mB z) \Big\rvert\\
&\lesssim \sum_{z \in \Z^d \setminus \{0\}} (1+ \lvert \ell + 2N \mB z \rvert)^{2-d-j}\\
&\lesssim N^{2-d-j} \sum_{z \in \Z^d \setminus \{0\}} \lvert \mB^{-1}\ell/N + 2z \rvert^{2-d-j}\\
&\lesssim N^{2-d-j},
\end{align*}
where we used that the series converges due to $j \geq 3$ and the estimate is
uniform due to the uniform lower bound $\lvert \mB^{-1}\ell/N + 2z \rvert \geq
1$.

It remains to establish the estimate for $j=1,2$, which we will obtain
from a discrete Poincar\'e inequality: For all $g : \Lhom_{N} \to \R^m$ we clearly have
   \[\lvert g(x) - g(y) \rvert \leq C N \lVert \Dh g \rVert_{\ell^\infty(\Lhom_{N})}
   \qquad  \forall  x, y \in \Lhom_{N} \]
hence it immediately follows that
\begin{equation} \label{eq:proof:err est FN:poincare}
  \| g - \<g\>_{\Lhom_N} \|_{\ell^\infty(\Lhom_{N})} \leq C N \| \Dh g \|_{\ell^\infty(\Lhom_{N})},
\end{equation}
   where $\<g\>_{\Lhom_N} = \lvert \Lhom_N \rvert^{-1} \sum_{\ell \in \Lhom_N} g(\ell)$.

Fix $\rho, \sigma \in \Rc$ and let $C_N :=  \< D_\rho D_\sigma \G-D_\rho D_\sigma\G_N \>_{\Lhom_N}$, then combining the estimate for $j=3$ and \eqref{eq:proof:err est FN:poincare}
   we obtain
   \begin{align*}
      \| D_\rho D_\sigma \G- D_\rho D_\sigma \G_N \|_{\ell^\infty(\Lhom_{N})}
      &\leq   \| D_\rho D_\sigma \G - D_\rho D_\sigma \G_N - C_N \|_{\ell^\infty(\Lhom_{N})}
               + | C_N | \\
      & \lesssim N \| \Dh D_\rho D_\sigma \G - \Dh D_\rho D_\sigma \G_N \|_{\ell^\infty(\Lhom_{N})}
               + | C_N |\\
      & \lesssim N^{-d} + \big| C_N \big|.
   \end{align*}
   It thus remains to estimate $C_N$.

   Periodicity of $\G_N$ implies that $\< D_\rho D_\sigma \G_N \>_{\Lhom_N} = 0$, hence,
   \[
      C_N = \lvert \Lhom_N \rvert^{-1} \sum_{\ell \in \Lhom_N} D_\rho D_\sigma \G (\ell).
   \]
   Using discrete summation by parts we see that
   \begin{align*}
      |C_N|
      &= |\Lhom_N|^{-1}
      \bigg| \sum_{\ell \in (\Lhom_N + \rho) \setminus \Lhom_N} D_\sigma \G (\ell)
      - \sum_{\ell \in \Lhom_N \setminus (\Lhom_N + \rho)} D_\sigma \G (\ell) \bigg| \\
      &\lesssim N^{-d} N^{d-1} N^{1-d} = N^{-d}.
   \end{align*}
   This establishes the result for $j=2$.

   To prove the estimate for $j = 1$, we can repeating the same argument on just
   $D_\rho  \G- D_\rho  \G_N$, to obtain
   \begin{equation*}
    \| D_\rho \G- D_\rho \G_N \|_{\ell^\infty(\Lhom_{N})}
      \lesssim N^{1-d} + N^{-d} \bigg| \sum_{\ell \in \Lhom_N} D_\rho \G (\ell) \bigg|.
   \end{equation*}
   From here on, however, we need to argue differently and in particular exploit
   cancellations due to symmetries in the Green's function, to avoid logarithmic
   terms. To that end, we define the point symmetric extension
   \[ \Lhom_{s,N} = \Lhom \cap \mB[-N,N]^d\]
   of $\Lhom$ (i.e., $-\Lhom_{s,N} = \Lhom_{s,N}$)
   and use $\G(\ell)=\G(-\ell)$ to calculate
\begin{align*}
\bigg| \sum_{\ell \in \Lhom_N} D_\rho \G (\ell) \bigg| &\lesssim \bigg| \sum_{\ell \in \Lhom_{s,N}} D_\rho \G (\ell) \bigg| + N^{1-d} N^{d-1}\\
      &= \bigg| \sum_{\ell \in (\Lhom_{s,N} + \rho) \setminus \Lhom_{s,N}} \G (\ell)
      - \sum_{\ell \in \Lhom_{s,N} \setminus (\Lhom_{s,N} + \rho)} \G (\ell) \bigg|+1\\
      &=\bigg| \sum_{\ell \in (\Lhom_{s,N} + \rho) \setminus \Lhom_{s,N}} \G (-\ell)
      - \sum_{\ell \in \Lhom_{s,N} \setminus (\Lhom_{s,N} + \rho)} \G (\ell) \bigg| + 1 \\
      &= \bigg| \sum_{\ell \in (\Lhom_{s,N} - \rho) \setminus \Lhom_{s,N}} \G (\ell)
      - \sum_{\ell \in \Lhom_{s,N} \setminus (\Lhom_{s,N} + \rho)} \G (\ell) \bigg| + 1\\
      &= \bigg| \sum_{\ell \in (\Lhom_{s,N}) \setminus (\Lhom_{s,N}+\rho)} \G (\ell-\rho ) - \G(\ell) \bigg| + 1\\
      &\leq \sum_{\ell \in (\Lhom_{s,N}) \setminus (\Lhom_{s,N}+\rho)} \lvert D\G (\ell) \rvert + 1\\
      &\lesssim N^{d-1} N^{1-d} + 1\\
      &\lesssim 1.
   \end{align*}
   Hence, $\| D_\rho \G- D_\rho \G_N \|_{\ell^\infty(\Lhom_{N})} \lesssim  N^{1-d}$.
\end{proof}

\subsection{Inf-sup stability}
\label{sec:infsup-proof}
\def\WHH{W}
The first step in the error analysis of the supercell approximation is to
establish that it inherits inf-sup stability \eqref{eq:infsup}. This is a
generalisation of the result in \cite[Theorem 7.7]{EOS-preprint} that the
supercell approximation inherits positivity of the Hessian operator, a more
stringent notion of stability suitable only for minimisers.

In the stability analysis it is convenient to factor out constants from $\Wi$
and $\Wper_N$. Let $\HH_0 \subset \Wi$ and $\HH_{N,0} \subset \Wper_N$ denote
the $m$-dimensional subspaces of all constant functions, then we define
\[
    \WH := \Wi / \HH_0 \qquad \text{and} \qquad
    \WHper_N := \Wper_N / \HH_{N,0}.
\]
The associated equivalence classes of a function $u \in \Wi, \Wper_N$ are
denoted by $[u]$, however, whenever an expression is independent of constants,
we will abuse notation and identify $[u] \equiv u$, for example, $D[u] =
Du$. The inner products associated with $\WH, \WHper_N$ are then defined by
\[
    ( v, w )_{\WH} = ( Dv, Dw )_{\ell^2(\L)}  \qquad \text{and}
    \qquad
    ( v, w )_{\WHper_N} = ( Dv, Dw )_{\ell^2(\L_N)},
\]
and turn these factor spaces into Hilbert spaces.

For the proofs of the following results, recall that we made the standing
assumption \eqref{eq:stabhom} that the homogeneous reference lattice is stable.
Without this (standard) assumption the negative eigenspace identified in
Lemma~\ref{th:infsup-proof:prelim} need not be finite-dimensional, and this
would make our strategy infeasible.

\begin{lemma} \label{th:infsup-proof:prelim}
  (i) For all $u \in \Wi$, there exists a subspace $\HH_1 \subset \Wi$ with
  finite co-dimension such that
  \[
      \big\< \delta^2 \E(u) v, v \big\> \geq \smfrac12 c_0 \|Dv\|_{\ell^2}^2
      \qquad \forall v \in \HH_1.
  \]

  (ii) If, in addition, $u$ is inf-sup stable \eqref{eq:infsup} then there
  exists $c_1 > 0$ and an orthogonal decomposition $\WH = \WHH_- \oplus \WHH_+$
  with ${\rm dim}(\WHH_-) = q$ finite and
  \[
    \pm \big\< \delta^2 \E(u) v, v \big\> \geq c_1 \|Dv\|_{\ell^2}^2
    \qquad \forall v \in \WHH_\pm.
  \]
  Moreover, we may choose $\WHH_- = {\rm span}\{\psi_1, \dots, \psi_q \}$, where
  $\psi_j$ are eigenfunctions of $\delta^2 \E(u)$ in the $\WH$ sense.
\end{lemma}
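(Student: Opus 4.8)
The plan is to realise $\delta^2\E(u)$, via the Riesz isomorphism, as a bounded self-adjoint operator $\mathcal{L}$ on the Hilbert space $\WH$ (with inner product $(v,w)_{\WH}=(Dv,Dw)_{\ell^2}$), to exhibit $\mathcal{L}$ as a \emph{compact perturbation of a uniformly positive operator}, and then to read off both statements from the spectral theorem together with Weyl's theorem on the stability of the essential spectrum under compact perturbations.

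First I would fix $R_1$ sufficiently large that $V_\ell=V$, $\Rc_\ell=\Rc$ and $\L\setminus B_{R_1}=\Lhom\setminus B_{R_1}$ for $\lvert\ell\rvert>R_1$, and split, for $v\in\Wi$,
\begin{align*}
  \big\<\delta^2\E(u)v,v\big\>
  &= \sum_{\lvert\ell\rvert>R_1}\nabla^2 V(0)\big[Dv(\ell),Dv(\ell)\big]
   + \sum_{\lvert\ell\rvert\le R_1}\nabla^2 V_\ell(Du(\ell))\big[Dv(\ell),Dv(\ell)\big]\\
  &\qquad + \sum_{\lvert\ell\rvert>R_1}\big(\nabla^2 V(Du(\ell))-\nabla^2 V(0)\big)\big[Dv(\ell),Dv(\ell)\big]
   =: a(v,v)+k_1(v,v)+k_2(v,v).
\end{align*}
Here $k_1$ involves only $Dv(\ell)$ for finitely many $\ell$, so its Riesz operator is finite-rank. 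For $k_2$ I would use that $u\in\Wi$ forces $Du(\ell)\to 0$, hence, by continuity of $\nabla^2 V$, that the matrix-valued coefficients $h(\ell):=\nabla^2 V(Du(\ell))-\nabla^2 V(0)$ tend to $0$; since $k_2(v,w)=(M_h Dv,Dw)_{\ell^2}$ with $M_h$ the corresponding truncated multiplication operator, $M_h$ is a norm limit of finite-rank operators, hence compact, and composing with the bounded operator $D:\WH\to\ell^2$ shows that the Riesz operator of $k_2$ is compact. For $a$ I would invoke $Dv(\ell)=\Dh(S^{\rm hom}v)(\ell)$ for $\lvert\ell\rvert>R_1$ and $\L\setminus B_{R_1}=\Lhom\setminus B_{R_1}$ (Lemma~\ref{th:properties_S}) to write
\begin{align*}
  a(v,v) &= \big\<\delta^2\E^{\rm hom}(0)\,S^{\rm hom}v,\,S^{\rm hom}v\big\>\\
  &\qquad - \sum_{\ell\in\Lhom,\,\lvert\ell\rvert\le R_1}\nabla^2 V(0)\big[\Dh(S^{\rm hom}v)(\ell),\Dh(S^{\rm hom}v)(\ell)\big],
\end{align*}
then bound the first term below by $c_0\lVert\Dh S^{\rm hom}v\rVert_{\ell^2(\Lhom)}^2\ge c_0\big(\lVert Dv\rVert_{\ell^2}^2-\lVert Dv\rVert_{\ell^2(\L\cap B_{R_1})}^2\big)$ using phonon stability \eqref{eq:stabhom}, and bound the finitely many subtracted terms by $C\lVert Dv\rVert_{\ell^2(\L\cap B_{\RS})}^2$ using the estimates in Lemma~\ref{th:properties_S}. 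This gives $a(v,v)\ge c_0\lVert Dv\rVert_{\ell^2}^2-c(v,v)$ with $c$ a nonnegative finite-rank form, so with $\mathcal{A}'$ the Riesz operator of $a+c$ and $\mathcal{K}:=\mathcal{L}-\mathcal{A}'$ we obtain $\mathcal{L}=\mathcal{A}'+\mathcal{K}$ with $\mathcal{A}'$ bounded, self-adjoint, $(\mathcal{A}'v,v)_{\WH}\ge c_0\lVert Dv\rVert_{\ell^2}^2$, and $\mathcal{K}$ compact.

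From here both claims are routine spectral theory. By Weyl's theorem, $\sigma_{\rm ess}(\mathcal{L})=\sigma_{\rm ess}(\mathcal{A}')\subset[c_0,\infty)$, so $\sigma(\mathcal{L})\cap(-\infty,\smfrac12 c_0)$ is a finite set of eigenvalues of finite total multiplicity (being separated from the essential spectrum); with $P$ the associated finite-rank spectral projection and $\HH_1\subset\Wi$ the preimage of $\ker P$ under $\Wi\to\WH$, the spectral theorem gives $\big\<\delta^2\E(u)v,v\big\>=(\mathcal{L}v,v)_{\WH}\ge\smfrac12 c_0\lVert Dv\rVert_{\ell^2}^2$ for $v\in\HH_1$, proving (i). For (ii), inf-sup stability \eqref{eq:infsup} is precisely invertibility of $\mathcal{L}$, so $\delta_0:={\rm dist}(0,\sigma(\mathcal{L}))>0$; together with $\sigma_{\rm ess}(\mathcal{L})\subset[c_0,\infty)$ this forces $\sigma(\mathcal{L})\cap(-\infty,0)$ to be a finite set $\{\mu_1\le\dots\le\mu_q\}\subset(-\infty,-\delta_0]$, with $\WH$-orthonormal eigenfunctions $\psi_1,\dots,\psi_q$ of $\mathcal{L}$ (equivalently, of $\delta^2\E(u)$ in the $\WH$ sense). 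Taking $\WHH_-:={\rm span}\{\psi_1,\dots,\psi_q\}$ and $\WHH_+:=\WHH_-^\perp$, the subspace $\WHH_+$ is $\mathcal{L}$-invariant with $\sigma(\mathcal{L}|_{\WHH_+})\subset[\delta_0,\infty)$, so the functional calculus yields $\pm\big\<\delta^2\E(u)v,v\big\>\ge\delta_0\lVert Dv\rVert_{\ell^2}^2$ for $v\in\WHH_\pm$, which is the claim with $c_1=\delta_0$ and $q$ as above.

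The main obstacle is the compact-perturbation step, and within it the bookkeeping required to compare the far field of $\delta^2\E(u)$ with the phonon-stable homogeneous Hessian on $\Lhom$ while peeling off the defect-core contributions and the region where the transfer operator $S^{\rm hom}$ acts as finite-rank corrections. It is worth noting that only $Du\in\ell^2$ (hence $Du(\ell)\to0$), and no sharp decay rate, is needed here to make the coefficient perturbation $k_2$ compact rather than merely bounded. Once $\mathcal{L}=\mathcal{A}'+\mathcal{K}$ is in place, the spectral-theoretic conclusions are standard.
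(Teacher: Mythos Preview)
Your argument is correct. The compact-perturbation decomposition is sound: $k_1$ and the correction $c$ are finite-rank because they depend on $Dv(\ell)$ at finitely many sites; $k_2$ is compact because the coefficient $h(\ell)=\nabla^2 V(Du(\ell))-\nabla^2 V(0)$ tends to zero (which follows from $Du\in\ell^2$), so the associated Riesz operator is a norm limit of finite-rank operators. Weyl's theorem then places $\sigma_{\rm ess}(\mathcal L)\subset[c_0,\infty)$, and both (i) and (ii) follow from the spectral theorem exactly as you describe.

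The paper's route differs mainly in part (i). Rather than building the compact-perturbation structure and invoking Weyl's theorem, the paper takes a direct and more elementary shortcut: it sets $\HH_1=\HH_R:=\{v\in\Wi:\,Dv|_{B_R}=0\}$ and observes that on $\HH_R$ one has
\[
\big|\big\<\delta^2\E(u)v,v\big\>-\big\<\delta^2\Ehom(0)S^{\rm hom}v,S^{\rm hom}v\big\>\big|
\lesssim \|Du\|_{\ell^\infty(\L\setminus B_R)}\,\|Dv\|_{\ell^2}^2,
\]
so phonon stability gives coercivity once $R$ is large. This avoids any operator-theoretic machinery and produces an explicit, constructive $\HH_1$. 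For part (ii) the two proofs are essentially identical: both pass to the self-adjoint Riesz operator on $\WH$, use (i) to conclude that the negative spectral subspace is finite-dimensional, and then read off the decomposition. Your approach has the virtue of setting up the operator framework once and using it uniformly for both parts; the paper's approach trades that uniformity for a shorter, more self-contained proof of (i).
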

\begin{proof}
  (i) Let $\HH_R := \{ v \in \Wi \,|\, Dv|_{B_R} = 0 \}$, then for $v \in \HH_R$,
  and for $R > \Rdef +\rcut$,
  \begin{align*}
    \big| \big\< \delta^2 \E(u) v, v \big\>
          - \big\< \delta^2 \Ehom(0) S^{\rm hom} v, S^{\rm hom} v \big\> \big|
    &\lesssim
    \sum_{\ell \in \L \setminus B_{R}}
      \big| \nabla^2 V(Du(\ell)) - \nabla^2 V(0) \big| \, | Dv(\ell)|^2 \\
    & \lesssim
      \| Du \|_{\ell^\infty(\L \setminus B_{R})} \,
       \|Dv\|_{\ell^2}^2
     \lesssim \epsilon_R \|Dv\|_{\ell^2}^2.
  \end{align*}
  where $\epsilon_R \to 0$ as $R \to \infty$ since $Du \in \ell^2$.
  Phonon stability \eqref{eq:stabhom} then implies that, for $R$ sufficiently large,
  \[
    \big\< \delta^2 \E(u) v, v \big\> \geq \smfrac12 c_0 \| Dv \|_{\ell^2}^2.
  \]
  Since the co-dimension of $\HH_R$ is finite, statement (i) follows
  with $\HH_1 = \HH_R$.

  (ii) Since $\delta^2 \E(u)$ is a symmetric, continuous bilinear map on $\WH$,
  there is a unique linear, self-adjoint, bounded operator $A(u) \in L(\WH)$
  with $\< \delta^2 \E(u) v, w \> = (A(u)v,w)_{\WH}$ for all $v,w \in \WH$.
  Inf-sup stability \eqref{eq:infsup} implies that $A(u)$ is an isomorphism.
  Thus, the spectrum of $A(u)$ is real, bounded, and bounded away from $0$. In
  light of (i), the spectral subspace of the negative part of the spectrum is
  finite dimensional. The negative part of the spectrum thus consists of only
  finitely many eigenvalues (with multiplicity) $\lambda_1 \leq \cdots \leq
  \lambda_q$ and associated orthonormal eigenfunctions $\psi_j$, $1 \leq j \leq q$. Define
  $\HH_- := {\rm span}\{\psi_1, \dots, \psi_q\}$ to be that spectral subspace,
  and $\HH_+$ the orthogonal complement of $\HH_-$, then (ii) follows.
\end{proof}

\begin{lemma}  \label{th:inf-sup-N}
  Suppose that $u \in \Wi$ is inf-sup stable \eqref{eq:infsup} then, for $N$
  sufficiently large,
  \begin{equation} \label{eq:inf-sup-N}
     \inf_{\substack{v \in \Wper_N \\ \|Dv\|_{\ell^2} = 1}}
     \sup_{\substack{w \in \Wper_N \\ \|Dw\|_{\ell^2} = 1}}
     \b\< \delta^2 \E_N(T_{N}^{\rm per} u) v, w \b\> \geq \min(c_0/8, c_1/4).
  \end{equation}
\end{lemma}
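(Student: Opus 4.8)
The plan is to transfer the splitting $\WH = \WHH_- \oplus \WHH_+$ from Lemma~\ref{th:infsup-proof:prelim}(ii) to the periodic spaces and to show that the quadratic form of $\delta^2 \E_N(T_N^{\rm per} u)$ is essentially diagonal with respect to (periodic images of) this splitting, up to errors that vanish as $N \to \infty$. First I would construct a finite-dimensional ``bad'' subspace $\WHH_{-,N} \subset \WHper_N$ by periodising the eigenfunctions $\psi_1, \dots, \psi_q$: since each $\psi_j$ has exponentially — or at least algebraically, by \eqref{eq:decay_ubar}-type decay of eigenfunctions — decaying strain, the periodised truncations $[T_N^{\rm per} \psi_j]$ form, for $N$ large, a basis of a $q$-dimensional subspace $\WHH_{-,N}$ on which $\langle \delta^2 \E_N(T_N^{\rm per} u) \cdot, \cdot \rangle$ is negative definite with constant close to $c_1$; the key estimates here come from Lemma~\ref{th:TR_estimates} (to control $\|D[\psi_j - T_N^{\rm per}\psi_j]\|_{\ell^2}$ by the tail $\|D\psi_j\|_{\ell^2(\Lambda_N \setminus \Lambda_{N/2})} \to 0$) and from the near-homogeneity of $\E_N$ versus $\E$ outside the defect core, which lets us compare $\delta^2 \E_N(T_N^{\rm per}u)$ with $\delta^2 \E_N(0)$ using $\|DT_N^{\rm per} u\|_{\ell^\infty(\L_N \setminus B_R)} \to 0$ (Corollary~\ref{th:decay_TN_ubar}).

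Next, on the orthogonal complement $\WHH_{+,N}$ of $\WHH_{-,N}$ in $\WHper_N$, I would prove the lower bound $\langle \delta^2 \E_N(T_N^{\rm per} u) w, w \rangle \geq \tfrac14 c_1 \|Dw\|_{\ell^2}^2$ for all $w \in \WHH_{+,N}$. The mechanism is the same two-scale argument used in Lemma~\ref{th:infsup-proof:prelim}(i): write $\langle \delta^2\E_N(T_N^{\rm per}u)w,w\rangle = \langle \delta^2\E_N(0)w,w\rangle + (\text{defect-core correction})$, use periodic phonon stability (Lemma~\ref{lem:periodicstable}) to get $\langle \delta^2 \E_N^{\rm hom}(0) S_N^{\rm hom} w, S_N^{\rm hom}w\rangle \geq c_0 \|\Dh S_N^{\rm hom}w\|_{\ell^2}^2$, and control the discrepancy between the defect and homogeneous Hessians near the core via Lemma~\ref{th:properties_S}. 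The one new subtlety is that $w \in \WHH_{+,N}$ need not be orthogonal to the genuine negative directions: one must show that any $w$ with a large component along the ``would-be bad'' subspace is excluded, which is exactly what the choice $\WHH_{+,N} = \WHH_{-,N}^\perp$ together with the basis estimate for $\WHH_{-,N}$ buys us — a Pythagoras/perturbation argument gives, for $w \perp \WHH_{-,N}$, that its $\ell^2$-projection onto $\mathrm{span}\{\psi_j\}$ (now viewed in $\Wi$ via $S_N^{\rm def}$ and truncation) is $o(1)\|Dw\|_{\ell^2}$, so that the positive bound on $\WHH_+$ from Lemma~\ref{th:infsup-proof:prelim}(ii) survives with a slightly worse constant $c_1/4$ (and $c_0/8$ entering through the homogeneous estimate).

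Finally I would assemble the inf-sup constant: given arbitrary $v \in \Wper_N$ with $\|Dv\|_{\ell^2}=1$, decompose $v = v_- + v_+$ along $\WHH_{-,N} \oplus \WHH_{+,N}$, and test against $w = -v_- + v_+$ (normalised); the cross term $\langle \delta^2\E_N(T_N^{\rm per}u) v_-, v_+\rangle$ is $o(1)\|Dv_-\|\|Dv_+\|$ because $v_\pm$ are near-eigendirections of an operator that is block-diagonal up to $o(1)$, and the diagonal terms contribute $c_1\|Dv_-\|^2 + \tfrac14 c_1\|Dv_+\|^2 \gtrsim \min(c_0/8,c_1/4)$. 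The main obstacle I anticipate is not any single estimate but the bookkeeping of three simultaneously competing error sources — truncation error of the eigenfunctions, the defect-core perturbation $T_N^{\rm per}u$ versus $0$, and the homogeneous-versus-defect Hessian mismatch near the core — all of which must be shown to be $o(1)$ uniformly and then absorbed without degrading the final constant below $\min(c_0/8, c_1/4)$; getting the near-orthogonality of the periodised splitting sharp enough to kill the cross term is the delicate point.
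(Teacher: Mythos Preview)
Your overall architecture matches the paper: periodise the negative eigenfunctions to obtain $\WHH_{-,N}$, show that $H_N := \delta^2\E_N(T_N^{\rm per} u)$ is uniformly negative there and uniformly positive on its orthogonal complement, and deduce the inf-sup bound. (For the last step, note that testing against $w = v_+ - v_-$ makes the cross term cancel \emph{exactly} by symmetry of $H_N$, so you do not need it to be $o(1)$.) Your treatment of $\WHH_{-,N}$ is essentially the paper's.

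The gap is in your argument for positivity on $\WHH_{+,N}$. You invoke two mechanisms --- periodic phonon stability $\langle H_N^{\rm hom}(0)\,\cdot,\cdot\rangle \geq c_0\|\cdot\|^2$, and the infinite-lattice bound on $\WHH_+$ applied to a truncation of $w$ --- but neither works by itself and you have not said how to merge them. Concretely: the ``defect-core correction'' $\langle H_N w, w\rangle - \langle H_N^{\rm hom}(0) S_N^{\rm hom}w, S_N^{\rm hom}w\rangle$ is of order $\|Dw\|_{\ell^2(B_R)}^2$ for some \emph{fixed} $R$, and orthogonality to the $q$ functions $T_N^{\rm per}\psi_j$ cannot make this small (the core perturbation has infinite rank; Lemma~\ref{th:properties_S} gives only boundedness, not smallness). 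Conversely, your observation that $T_N w$ has $o(1)$ projection onto $\WHH_-$ yields $\langle H\, T_N w, T_N w\rangle \geq c_1\|T_N w\|_{\WH}^2$, but this is useless when $Dw$ concentrates near the cell boundary: then $T_N w$ loses an $O(1)$ fraction of the mass and $\langle H_N w, w\rangle$ is not close to $\langle H\, T_N w, T_N w\rangle$.

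The paper resolves this not by a direct estimate but by a concentration-compactness argument: take a minimising sequence $v_k \in \WHH_{+,N_k}$ and split $v_k = a_k + b_k$ with $a_k = T^{\rm per}_{N_k,R_k} v_k$ at an \emph{intermediate} radius $R_k \to \infty$ chosen (via \cite[Lemma~7.8]{EOS-preprint}) slowly enough that both $\langle H_{N_k} a_k, b_k\rangle$ and $(a_k,b_k)_{\WHper_{N_k}}$ vanish along a subsequence. Then $b_k$ is supported outside $\L_{R_k/2}$ and is handled by Lemma~\ref{lem:periodicstable}, while $a_k$ is supported in $\L_{R_k} \subset \L_{N_k/4}$ so that $\langle H_{N_k} a_k, a_k\rangle = \langle H\, T_{N_k} a_k, T_{N_k} a_k\rangle$ \emph{exactly}, and it is here that your near-orthogonality observation is used to show $T_{N_k}a_k$ has vanishing $\WHH_-$-component. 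The missing ingredient in your proposal is precisely this two-scale splitting with a carefully chosen growing radius and the vanishing-cross-term lemma; without it, the direct perturbation you sketch cannot close.
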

\begin{proof}
  Let $u_N=T_{N}^{\rm per} u$, $H_N := \delta^2 \E_N(u_N)$ and $H := \delta^2 \E(u)$. We will
  consider the orthogonal decomposition $\WHper_N = \WHH_{N,+} \oplus \WHH_{N,-}$,
  where
  \[
      \WHH_{N,-} = {\rm span}\big\{ T_{N,N/2}^{\rm per} \psi_j \,|\, j = 1, \dots, q \big\}
  \]
  with $\psi_j$ the negative eigenfunctions of $H$ (cf.
  Lemma~\ref{th:infsup-proof:prelim}(ii)) and $\WHH_{N,+}$ its orthogonal
  complement. We will prove that $H_N$ is uniformly positive on $\WHH_{N,+}$ and uniformly negative on $\WHH_{N,-}$, which implies the stated inf-sup condition \eqref{eq:inf-sup-N}.

  If $v_N \in \WHH_{N,-}$ then $v_N = T_{N,N/2}^{\rm per} v$ for some $v \in
  \WHH_-$. In particular, $Dv_N(\ell) = 0$ for $\ell \in \L_N \setminus \L_{N/2}$
  and since also $Du_N(\ell) = Du(\ell)$ for all $\ell \in \L_{N/2}$ we obtain
  \begin{align*}
      \big\< H_N  T_{N,N/2}^{\rm per} v, T_{N,N/2}^{\rm per} v \big\>
      &=
      \sum_{\ell \in \L_{N/2}}
      \nabla^2 V_\ell(D u_N(\ell))\big[ DT_{N,N/2}^{\rm per} v, DT_{N,N/2}^{\rm per} v \big] \\
      &=
      \sum_{\ell \in \L_{N/2}}
      \nabla^2 V_\ell(D u(\ell))\big[ DT_{N/2} v, DT_{N/2} v \big] \\
      &=
      \big\< H T_{N/2} v, T_{N/2} v \big\> \\
      &=
      \big\< H v, v \big\> + \big\< H \big(T_{N/2} v + v\big), T_{N/2}v - v \big\> \\
      &\leq
      - c_1 \|v\|_{\WH}^2 + C \| v \|_{\WH} \| T_{N/2} v - v \|_{\WH}.
  \end{align*}
  Since $\| T_{N/2}\psi_j - \psi_j \|_{\WH} \to 0$ for all $1 \leq j \leq q$, for a given $\epsilon$ we
  obtain $\| T_{N/2} v - v \|_{\WH} \leq \epsilon \|  v \|_{\WH}$ for all $N$ large enough uniformly in $v \in \WHH_-$. For $\epsilon$ small enough,
  \begin{equation}  \label{eq:prf-infsup-Hminus-bound}
      \< H_N v_N, v_N \> \leq
      (-c_1 + C\epsilon) \|v\|_{\WH}^2
      \leq \frac{-c_1 + C\epsilon}{(1 + \epsilon)^2} \|v_N\|_{\WHper}^2 \leq -c_1/2 \|v_N\|_{\WHper}^2.
  \end{equation}

  Next we prove uniform positivity of $H_N$ on $\WHH_{N,+}$, the complement of
  $\WHH_{N,-}$. This is a straightforward variation of the argument when $\WHH_{N,-} =
  \{0\}$ treated in \cite[Theorem 7.7]{EOS-preprint}. First, we take an increasing
  sequence $N_k$ such that
  \[ \lim_{k \to \infty}\min_{\substack{v \in \WHH_{N_k,+} \\ \lVert v \rVert_{\WHper_{N_k}} = 1}}
        \< H_{N_k} v, v \> = \liminf_{N \to \infty}\min_{\substack{v \in \WHH_{N,+} \\ \lVert v \rVert_{\WHper_N} = 1}}
        \< H_N v, v \>, \]
  and then choose
  \[
      v_k \in \arg\min_{\substack{v \in \WHH_{N_k,+} \\ \lVert v \rVert_{\WHper_{N_k}} = 1}}
        \< H_{N_k} v, v \>.
  \]

  Next, we want to choose a second sequence $R_k \uparrow \infty$, $R_k \leq N_k/4$, and
  decompose
  \[
      v_k = a_k + b_k := T_{N_k,R_k}^{\rm per} v_k + (I-T_{N_k,R_k}^{\rm per}) v_k.
  \]
  According to \cite[Lemma 7.8]{EOS-preprint} and \cite[Proof of Theorem 7.7]{EOS-preprint} , one can find a subsequence of $(N_k)_{k\in\N}$ (not relabelled) and $R_k \uparrow \infty$ sufficiently slowly such that
  \begin{equation} \label{eq:proof-infsup:vanishing-cross-terms}
    \< H_{N_k}   a_k, b_k \> \to 0 \qquad \text{and} \qquad
    (D_\rho a_k, D_\sigma b_k)_{\ell^2(\L_{N_k})} \to 0
    \qquad \text{as }  k \to \infty,
  \end{equation}
  for all $\rho, \sigma \in \Rc$.
  We split
  \[
      \< H_{N_k} v_k, v_k \>
      =
      \< H_{N_k} a_k, a_k\> + 2 \< H_{N_k} a_k, b_k \> + \< H_{N_k} b_k, b_k \>.
  \]
  According to \eqref{eq:proof-infsup:vanishing-cross-terms},
  the cross-term vanishes in the limit, $ \< H_{N_k} a_k, b_k \> \to 0$.

   Since the support of $Db_k$ does not intersect the defective region it is
   easy to see that
  \[
    \< H_{N_k} b_k, b_k \>
    =
    \big\<\delta^2 \E_{N_k}^{\rm hom}(u_{N_k}^{\rm hom}) S_{N_k}^{\rm hom} b_k, S_{N_k}^{\rm hom} b_k \big\>.
  \]
  where $u_{N_k}^{\rm hom} := S_{N_k}^{\rm hom} u_{N_k}$. Next, we observe that
  \begin{align*}
    &\Big|\big\< \big[\delta^2 \E_{N_k}^{\rm hom}(u_{N_k}^{\rm hom})
          - \delta^2 \E_{N_k}^{\rm hom}(0) \big]
           S_{N_k}^{\rm hom} b_k, S_{N_k}^{\rm hom} b_k \big\>\Big| \\
    =~& \bigg|\sum_{\ell \in \L_{N_k}^{\rm hom} \setminus \L_{R_k/2}^{\rm hom}}
        \Big( \nabla^2 V(D u_{N_k}^{\rm hom}(\ell)) - \nabla^2 V(0) \Big)
        \big[ DS_{N_k}^{\rm hom} b_k(\ell), DS_{N_k}^{\rm hom} b_k(\ell)\big] \bigg| \\
    \lesssim~&
    \epsilon_k \| D S_{N_k}^{\rm hom} b_k \|_{\ell^2(\L^{\rm per, hom}_{N_k})}^2
    \lesssim
    \epsilon_k \| b_k \|_{\WHper_{N_k}}^2,
  \end{align*}
  where, according to \eqref{eq:TR_estimates:err1},
  \begin{align*}
  \epsilon_k &:= \max_{\ell \in \L_{N_k}^{\rm hom} \setminus \L_{R_k/2}^{\rm hom}} \big| \Dh u_{N_k}^{\rm hom}(\ell) \big|\\
  &=\max_{\ell \in \L_{N_k}\setminus \L_{R_k/2}} \big| D T_{N_k} u(\ell) \big|\\
  &\leq \lVert D T_{N_k} u \rVert_{\ell^2(\L_{N_k}\setminus \L_{R_k/2})} \\
  &\leq \lVert D u \rVert_{\ell^2(\L_{N_k}\setminus \L_{R_k/2})} + \lVert D T_{N_k} u -Du \rVert_{\ell^2(\L_{N_k})} \\
  &\lesssim \lVert D u \rVert_{\ell^2(\L_{N_k}\setminus \L_{R_k/2})}\\
  &\to 0 \quad \text{ as } k \to \infty.
  \end{align*}
  In particular, using Lemma \ref{lem:periodicstable}, we obtain
  \begin{align}
    \notag
    \< H_{N_k} b_k, b_k \>
    &\geq
    \big\< \delta^2 \E_{N_k}^{\rm hom}(0) S_{N_k}^{\rm hom} b_k, S_{N_k}^{\rm hom} b_k \big\>
    - C\epsilon_k \| b_k\|_{\WHper_{N_k}}^2 \\
    &\geq
    (c_0 - C\epsilon_k) \| b_k\|_{\WHper_{N_k}}^2
    \label{eq:infsup-prf:HNbNbN}
    \geq
    c_0/2 \| b_k\|_{\WHper_{N_k}}^2,
  \end{align}
  for $k$ sufficienty large.

  Finally, since $Da_k$ are supported in $\L_{{N_k}/4}$ we have
 \begin{align*}
 (T_{N_k} a_k, \psi_j)_{\WH}
      &=
       (a_k, T_{N_k,N_k/2}^{\rm per}\psi_j)_{\WHper_{N_k}} + (T_{N_k} a_k, (I-T_{N_k/2})\psi_j)_{\WH}\\
     &= (v_k, T_{N_k,N_k/2}^{\rm per}\psi_j)_{\WHper_{N_k}}
      -(b_k, T_{N_k,N_k/2}^{\rm per}\psi_j)_{\WHper_{N_k}}  + 0 \\
    &= -(b_k, T_{N_k,N_k/2}^{\rm per}\psi_j)_{\WHper_{N_k}}
 \end{align*}
 Hence, for all $j = 1, \dots, q$,
\begin{align*}
\lvert (T_{N_k} a_k, \psi_j)_{\WH} \rvert &= \lvert (b_k, T_{N_k,N_k/2}^{\rm per}\psi_j)_{\WHper_{N_k}} \rvert \\
&\lesssim \max_j \lVert D\psi_j \rVert_{\ell^2(\Lambda_{N_k} \setminus \Lambda_{R_k/2})}
 =: \gamma_k \to 0.
\end{align*}
  Writing $T_{N_k} a_k = a_k^+ + a_k^-$ with $a_k^\pm \in \WHH_\pm $, Lemma~\ref{th:infsup-proof:prelim}(ii) implies
  \begin{equation}     \label{eq:infsup-prf:HNaNaN}
      \< H_{N_k} a_k, a_k \>
      =
      \< H T_{N_k} a_k, T_{N_k} a_k \> \geq c_1 \| a_k^+ \|_{\WH}^2 - C \| a_k^- \|_{\WH}^2  \geq c_1 \| a_k \|_{\WHper_{N_k}}^2 - C \gamma_k.
  \end{equation}

  In summary, combining \eqref{eq:infsup-prf:HNaNaN} with
  \eqref{eq:infsup-prf:HNbNbN} and  recalling again
  \eqref{eq:proof-infsup:vanishing-cross-terms} we conclude that, for
  $k$ sufficiently large,
  \begin{align*}
  \inf_{\substack{v \in \WHH_{N_k,+} \\ \lVert v \rVert_{\WHper_{N_k}} = 1}}
        \< H_{N_k} v, v \>
      & = \< H_{N_k} v_k, v_k \> \\[-7mm]
      &\geq
       \min(c_0/2, c_1)
          \big( \| a_k \|_{\WHper_{N_k}}^2 + \| b_k\|_{\WHper_{N_k}}^2 \big) -C \gamma_k  \\
      &= \min(c_0/2, c_1) \| v_k \|_{\WHper_{N_k}}^2
          - 2\min(c_0/2, c_1)\, ( a_k, b_k )_{\WHper_{N_k}}  -C \gamma_k \\
      &\geq \min(c_0/4, c_1/2).
  \end{align*}
  Due to the choice of the $N_k$, this means that for all $N$ large enough,
  \begin{align*}
  \inf_{\substack{v \in \WHH_{N,+} \\ \lVert v \rVert_{\WHper_{N}} = 1}}
        \< H_{N} v, v \>
      &\geq \min(c_0/8, c_1/4). \qedhere
  \end{align*}
  %
  %
\end{proof}

As an immediate corollary of the inf-sup stability of the supercell approximation
we obtain a convergence result in the energy norm.

\begin{theorem} \label{th:convergence_Enorm}
  Let $\us \in \Wi$ be an inf-sup stable solution to \eqref{eq:equil}, then
    there exist $C > 0$ such that, for $N$ sufficiently large, there are $\bar{u}_N \in \Wper_N$ satisfying \eqref{eq:results:supercellapprox} as well as
  \[
      \big\| D\us_N - D T_N^{\rm per}\us \big\|_{\ell^2} \lesssim N^{-d/2}.
  \]
\end{theorem}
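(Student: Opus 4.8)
The approach is the standard consistency--stability scheme. The plan is to show that $u_N^0 := T_N^{\rm per}\us$ is an approximate solution of \eqref{eq:results:supercellapprox}, namely
\[
  \big\| \delta\E_N(u_N^0) \big\|_{(\WHper_N)'} \lesssim N^{-d/2},
\]
and then to combine this consistency bound with the inf-sup stability of $\delta^2\E_N(u_N^0)$ provided by Lemma~\ref{th:inf-sup-N} and a quantitative inverse function theorem to obtain an exact supercell solution $\us_N$ at distance $\lesssim N^{-d/2}$ from $u_N^0$.

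For the consistency estimate I fix $v\in\Wper_N$ with $\|Dv\|_{\ell^2(\L_N)}=1$ and set $z := T_N v\in\Wc\subset\Wi$, the (non-periodic) truncation of $v|_{\L_N}$ at radius $N$. For $N$ large the functions $T_N\us$ and $z$ are supported in $\L_{5N/6}$, strictly inside $\L_N$ and away from its periodic images; consequently $D T_N^{\rm per}\us = DT_N\us$ on $\L_N$, the gradients $DT_N\us$ and $Dz$ vanish outside $\L_N$, and $DT_N^{\rm per}\us$, $Dz$ coincide with $D\us$, $Dv$ respectively on the bulk ball $\L_{4N/6-\rcut}$. Writing $Dv = Dz + g$ with $g := Dv - Dz$ supported in the annulus $\L_N\setminus\L_{4N/6-\rcut}$, extending the sum defining $\langle\delta\E_N(u_N^0),v\rangle$ from $\L_N$ to $\L$ (which adds only vanishing terms), and using that $\us$ solves \eqref{eq:equil} so $\langle\delta\E(\us),z\rangle=0$, I arrive at the identity
\[
  \big\langle \delta\E_N(u_N^0),\, v\big\rangle
  = \big\langle \delta\E(T_N\us) - \delta\E(\us),\, z \big\rangle
    + \sum_{\ell\in \L_N} \nabla V_\ell\big(DT_N\us(\ell)\big)\cdot g(\ell).
\]

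The first term is bounded using local Lipschitz continuity of $\delta\E : \Wi\to(\Wi)'$, whose constant is uniform in $N$ because $T_N\us$ and $\us$ lie in a fixed bounded subset of $\Wi$ (by \eqref{eq:TR_estimates:global}); this gives $\lesssim \|DT_N\us - D\us\|_{\ell^2(\L)}\,\|Dz\|_{\ell^2}$, with $\|Dz\|_{\ell^2}\lesssim 1$ by \eqref{eq:TR_estimates:global} and $\|DT_N\us - D\us\|_{\ell^2(\L)}\lesssim N^{-d/2}$, the latter because on $\L_N$ it is controlled by $\|D\us\|_{\ell^2(\L_N\setminus\L_{N/2})}$ via \eqref{eq:TR_estimates:err1} while on $\L\setminus\L_N$ it equals $\|D\us\|_{\ell^2(\L\setminus\L_N)}$, and both are $\lesssim N^{-d/2}$ by the decay \eqref{eq:decay_ubar}. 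For the second term, $g$ is supported in the annulus, where $V_\ell = V$, so I split $\nabla V(DT_N\us(\ell)) = \big(\nabla V(DT_N\us(\ell)) - \nabla V(0)\big) + \nabla V(0)$: the first piece is $\lesssim|DT_N\us(\ell)|\lesssim N^{-d}$ on the annulus by Corollary~\ref{th:decay_TN_ubar}, hence contributes $\lesssim N^{-d}(\#\L_N)^{1/2}\|g\|_{\ell^2}\lesssim N^{-d/2}$; and the constant piece equals $\nabla V(0)\cdot\sum_{\ell\in\L_N}g(\ell)$, which vanishes since $\sum_{\ell\in\L_N}D_\rho v(\ell)=0$ by $\L_N$-periodicity of $v$ and $\sum_{\ell\in\L_N}D_\rho z(\ell)=0$ because $z$ is compactly supported inside $\L_N$. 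Taking the supremum over $v$ yields the consistency bound. With this in hand, $u_N^0$ has an inf-sup stable Hessian $\delta^2\E_N(u_N^0)$ with constant $\geq\min(c_0/8,c_1/4)$ (Lemma~\ref{th:inf-sup-N}); a Hessian $\delta^2\E_N$ that is Lipschitz on a fixed-size neighbourhood of $u_N^0$ with constant uniform in $N$ (since $V_\ell\in C^4$ and $\|Du_N^0\|_{\ell^\infty}$ is uniformly bounded by Corollary~\ref{th:decay_TN_ubar}); and a residual $\lesssim N^{-d/2}\to 0$. A standard quantitative inverse function theorem (see, e.g., \cite{EOS2016}) then delivers, for $N$ large, a locally unique $\us_N$ solving \eqref{eq:results:supercellapprox} with $\|D\us_N - DT_N^{\rm per}\us\|_{\ell^2}\lesssim N^{-d/2}$.

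The main obstacle I anticipate is the bookkeeping in the consistency step: choosing the truncation so that the periodic interpolant and the truncation of $v$ coincide on a large bulk ball --- so that the $O(1)$ ``volume'' contributions cancel exactly and the residual is confined to a single lattice annulus of width $\sim N$ --- and, within that annulus, disposing of the constant far-field force $\nabla V(0)$ through the discrete summation-by-parts identity $\sum_{\ell\in\L_N}D_\rho w(\ell)=0$ valid for periodic or compactly supported $w$. Everything else, including the inverse-function-theorem step, is routine once Lemma~\ref{th:inf-sup-N} is available.
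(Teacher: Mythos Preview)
Your proposal is correct and follows precisely the approach the paper has in mind: the paper's proof simply states that the argument is identical to \cite[Thm.~2.6]{EOS2016} with positivity of $\delta^2\E_N(T_N^{\rm per}\us)$ replaced by the inf-sup stability from Lemma~\ref{th:inf-sup-N}, and what you have written is exactly that consistency--stability--IFT argument spelled out in detail. The only point requiring a touch more care than you give is the telescoping identity $\sum_{\ell\in\L_N} D_\rho v(\ell)=0$ in the presence of the defect, but since $g$ is supported in the homogeneous annulus this is easily handled by passing to $\Lhom_N$ via $S_N^{\rm hom}$, as you clearly anticipated under ``bookkeeping''.
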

\begin{proof}
  The proof of this result is identical to that of \cite[Thm. 2.6]{EOS2016},
  replacing positivity of $\delta^2\E_N(T_N^{\rm per} \us)$ with the new inf-sup stability
  result provided by Lemma~\ref{th:inf-sup-N}.
\end{proof}

\subsection{Uniform Convergence}
\label{sec:uniform}
Let $\us \in \Wi$ be an inf-sup stable solution to \eqref{eq:equil}, then
according to Theorem~\ref{th:convergence_Enorm} there exists $\us_N \in \Wper_N$ solving
\eqref{eq:results:supercellapprox} and satisfying
\[\lVert D \bar{u} - D \bar{u}_N\rVert_{\ell^2} \lesssim N^{-d/2}.\]

Throughout the remainder of this section, we
 fix $\us$ and the sequences $\us_N$,
as well as $v_N := T_{N}^{\rm per} \us, \us_N^{\rm hom} := S_N^{\rm hom} \us_N,
v_N^{\rm hom} := S_N^{\rm hom} v_N$,
\[
  e_N:= \bar{u}_N - v_N, \qquad \text{and} \qquad
  e_N^{\rm hom} := \us_N^{\rm hom} - v_N^{\rm hom}.
\]
In particular we will also assume implicitly that $N$ is sufficiently large so
that the existence of $\us_N$ is guarenteed. We will prove a uniform convergence
rate for $e_N$, from which Theorem~\ref{th:main theorem} will readily follow.

Recall from the definition of $T_N^{\rm per}$ and from
Corollary~\ref{th:decay_TN_ubar} that
\begin{equation} \label{eq:convprf:properties_vN}
\begin{split}
    Dv_N = D\bar{u} \quad &\text{ in } \L_{N/2}, \qquad
    \text{and}  \\
    |D^j v_N| \lesssim N^{1-d-j} \quad &\text{ in } \L_N \setminus \L_{N/3},
    \quad j = 1, 2.
\end{split}
\end{equation}

First, we use our Green's function estimates to
obtain an implicit estimate for $D e_N$.

\begin{lemma} \label{lem:pointwise}
There exists $r_0,C_1>0$ such that for all $\ell \in \L_N \setminus \L_{r_0}$
  \[
    |D e_N(\ell)| \leq C_1 \bigg( N^{-d} + \sum_{m \in \L_N} \big({\rm dist}(\ell-m, 2N \mB \Z^d)+1\big)^{-d}(1+ |m|)^{-d} |D e_N(m)| \bigg).
  \]
\end{lemma}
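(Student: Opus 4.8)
The plan is to derive a Green's-function representation of $De_N$ on the homogeneous supercell $\Lhom_N$ and then bound each contribution separately. Since $\bar u_N$ solves \eqref{eq:results:supercellapprox} and $e_N = \bar u_N - v_N$, subtracting and writing $\mathcal H_N := \int_0^1 \delta^2\E_N(v_N + t e_N)\,dt$ gives $\< \mathcal H_N e_N, w\> = -\<\delta\E_N(v_N), w\>$ for all $w \in \Wper_N$. I would split $\mathcal H_N = \delta^2\E^{\rm hom}_N(0) + \mathcal A^{\rm op}_N + \mathcal A^{\rm nl}_N$, where $\mathcal A^{\rm op}_N$ collects the coefficients $\nabla^2 V_\ell(Dv_N(\ell)) - \nabla^2 V(0)$ together with the contributions of the defect sites $V_\ell\ne V$, and $\mathcal A^{\rm nl}_N$ collects $\int_0^1[\nabla^2 V_\ell(Dv_N + tDe_N) - \nabla^2 V_\ell(Dv_N)]\,dt$; then transfer the identity onto $\Lhom_N$ with the operators $S^{\rm hom}_N, S^{\rm def}_N$ of Lemma~\ref{th:properties_S}, whose discrepancies from the identity are supported in $B_{\Rdef+\rcut}$. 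The result is that, for all $v \in \HH^{\rm per}_N(\Lhom_N)$,
\[
  \<\delta^2\E^{\rm hom}_N(0)\, e_N^{\rm hom}, v\> = (f^{\rm cons}_N, v)_{\ell^2} + (g^{\rm op}_N + g^{\rm nl}_N + g^{\rm tr}_N, \Dh v)_{\ell^2},
\]
where $g^{\rm op}_N$ is a flux with $|g^{\rm op}_N(\ell)| \lesssim (|Dv_N(\ell)| + \mathbf 1_{|\ell| \le \Rdef+\rcut})\,|\Dh e_N^{\rm hom}(\ell)|$, $g^{\rm nl}_N$ a flux with $|g^{\rm nl}_N(\ell)| \lesssim |\Dh e_N^{\rm hom}(\ell)|^2$, $g^{\rm tr}_N$ a flux supported in $B_{\RS}$ controlled by $|De_N|$ on $B_{\Rdef+\rcut}$, and $f^{\rm cons}_N$ is the \emph{nodal} force obtained from $-\<\delta\E_N(v_N), \cdot\>$ after discrete summation by parts. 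Crucially: because $\bar u$ solves \eqref{eq:equil} \emph{exactly} and $Dv_N = D\bar u$ on $\L_{N/2}$, the residual $f^{\rm cons}_N$ vanishes on $\L_{N/2-O(\rcut)}$ (in particular on the defect core), so it is supported in $\L_N \setminus \L_{N/3}$, where by \eqref{eq:convprf:properties_vN} one has $|f^{\rm cons}_N(\ell)| \lesssim |D^2 v_N(\ell)| + |Dv_N(\ell)|^2 \lesssim N^{-1-d}$.

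Next, both right-hand sides annihilate constants — a flux pairs with $\Dh(\mathrm{const}) = 0$, while $\sum_m f^{\rm cons}_N(m) = 0$ by periodicity — so the equation is solvable through the periodic Green's function $\G_N$, and applying $\Dh$ removes the undetermined additive constant:
\[
  \Dh e_N^{\rm hom}(\ell) = \sum_{m \in \Lhom_N} \Dh\G_N(\ell - m)\, f^{\rm cons}_N(m) + \sum_{m \in \Lhom_N} (\Dh)^2\G_N(\ell - m)\,\b(g^{\rm op}_N + g^{\rm nl}_N + g^{\rm tr}_N\b)(m).
\]
Inserting the sharp bounds $|\Dh\G_N(\ell)| \lesssim (1+{\rm dist}(\ell, 2N\mB\Z^d))^{1-d}$ and $|(\Dh)^2\G_N(\ell)| \lesssim (1+{\rm dist}(\ell, 2N\mB\Z^d))^{-d}$ from Lemma~\ref{th:error estimate FN}, I would estimate: (i) the consistency term lives on $\lesssim N^d$ sites with magnitude $\lesssim N^{-1-d}$, and since $\sum_{m \in \L_N}(1+{\rm dist}(\ell-m, 2N\mB\Z^d))^{1-d} \lesssim N$, it contributes $\lesssim N^{-d}$ uniformly in $\ell$; (ii) the operator and transfer fluxes, using $|Dv_N(\ell)| \lesssim (1+|\ell|)^{-d}$ (Corollary~\ref{th:decay_TN_ubar}) and $(1+|\ell|)^{-d} \asymp 1$ on the bounded sets $B_{\Rdef+\rcut}, B_{\RS}$, contribute exactly $\lesssim \sum_{m\in\L_N}(1+{\rm dist}(\ell-m, 2N\mB\Z^d))^{-d}(1+|m|)^{-d}|De_N(m)|$, after identifying $\Lhom_N$ with $\L_N$ off $B_{\Rdef+\rcut}$; (iii) the nonlinear flux satisfies $|g^{\rm nl}_N(m)| \lesssim \|De_N\|_{\ell^\infty(\L_N)}|De_N(m)|$, so by Cauchy--Schwarz and the $N$-uniform bound $\|(1+{\rm dist}(\cdot, 2N\mB\Z^d))^{-d}\|_{\ell^2(\Lhom_N)} \lesssim 1$ (which holds since $2d > d$) it contributes $\lesssim \|De_N\|_{\ell^\infty(\L_N)}\|De_N\|_{\ell^2(\L_N)} \lesssim N^{-d/2}\cdot N^{-d/2} = N^{-d}$ by Theorem~\ref{th:convergence_Enorm}. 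Choosing $r_0$ large enough that $\L_N \setminus \L_{r_0} \subset \L \setminus B_{\Rdef+\rcut}$ (so $\Dh e_N^{\rm hom} = De_N$ there by Lemma~\ref{th:properties_S}) and collecting the three bounds gives the claim.

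The step I expect to be the main obstacle is the consistency term. The naive choice of keeping $-\<\delta\E_N(v_N), \cdot\> = (g^{\rm cons}_N, \Dh v)$ as a flux, with $|g^{\rm cons}_N(\ell)| \lesssim (1+|\ell|)^{-d}$ spread over all of $\L_N$, fails: convolution of $(1+|\cdot|)^{-d}$ with $(\Dh)^2\G_N \sim (1+{\rm dist})^{-d}$ produces a logarithmic factor and is $O(1)$ near the defect. One must instead exploit that $\bar u$ solves \eqref{eq:equil} \emph{exactly} to perform a discrete summation by parts, producing a \emph{nodal} force $f^{\rm cons}_N$ supported only in the transition annulus $\L_N \setminus \L_{N/3}$ with the sharp magnitude $|f^{\rm cons}_N(\ell)| \lesssim |D^2 v_N(\ell)| + |Dv_N(\ell)|^2 \lesssim N^{-1-d}$; only then does its convolution with $\Dh\G_N$ land exactly at $N^{-d}$. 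Keeping precise track of which terms are nodal forces and which are fluxes (one $\Dh$ versus two on $\G_N$), and of the lattice identification near the defect, is the delicate part of the bookkeeping.
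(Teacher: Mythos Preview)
Your proposal is correct and follows essentially the same route as the paper: transfer to $\Lhom_N$, invert $\delta^2\E_N^{\rm hom}(0)$ via the periodic Green's function, and split the right-hand side into (a) a nodal consistency force supported in the annulus $\L_N\setminus\L_{N/3}$ with magnitude $N^{-1-d}$, (b) a nonlinear flux bounded by $|De_N|^2$ and controlled through $\|De_N\|_{\ell^2}^2\lesssim N^{-d}$, and (c) a linear flux with coefficient $\lesssim (1+|m|)^{-d}$ that produces the kernel sum in the statement. The only differences are organizational: the paper Taylor-expands $\delta\E_N^{\rm hom}(\bar u_N^{\rm hom})-\delta\E_N^{\rm hom}(v_N^{\rm hom})$ about $0$ (obtaining terms $T_1$--$T_5$) rather than splitting $\mathcal H_N$ upfront, and it handles the defect-core transfer more explicitly via two separate fluxes $\sigma_N^{\rm hom},\sigma_N^{\rm def}$ where you bundle them into $g^{\rm tr}_N$; your identification of the key obstacle (keeping the consistency term as a nodal force paired with one derivative of $\G_N$, not as a flux paired with two) is exactly the point.
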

\begin{proof}
Let
\begin{align*}
 \sigma_N^{\rm hom}(\ell) &:= \big(\nabla V(\Dh \bar{u}_N^{\rm hom}(\ell))
      - \nabla V(\Dh v_N^{\rm hom}(\ell))\big) \chi_{B_{\Rdef + \rcut}}(\ell), \\
 \sigma_N^{\rm def}(\ell) &:= \big(\nabla V_\ell(D \bar{u}_N(\ell))
      - \nabla V_\ell(D v_N(\ell)) \big) \chi_{\Rdef + \rcut}(\ell),  \\
 f_N^{\rm bdry}(\ell) &:= -{\rm Div} \nabla V_\ell(D v_N(\ell)) \\
 &:= \sum_{\rho \in -\Rc_\ell} \nabla_{D_\rho} V_{\ell - \rho}(Dv_N(\ell - \rho)) - \sum_{\rho \in \Rc_\ell} \nabla_{D_\rho} V_{\ell}(Dv_N(\ell)),
\end{align*}
where $\nabla_{D_\rho} V_\ell(Du(\ell)) = \partial V_\ell(Du(\ell)) / \partial
D_\rho u(\ell)$.

Then a straightforward algebraic manipulation shows that,
 for any $w \in \Wper_N(\Lhom_N)$,
\begin{align*}
\big\langle \delta \E_N^{\rm hom}(\bar{u}_N^{\rm hom}) &- \delta \E_N^{\rm hom}(v_N^{\rm hom}),w \big \rangle\\
&= (\sigma_N^{\rm hom}, Dw )_{\ell^2(\L_N^{\rm hom})} - (\sigma_N^{\rm def}, DS_N^{\rm def} w )_{\ell^2(\L_N)} - (f_N^{\rm bdry}, S_N^{\rm def} w)_{\ell^2(\L_N)}.
\end{align*}
Furthermore, for $N \geq 6 r_{\rm cut}$, it is straightforward
to establish that
\begin{align}
  \label{eq:prfconv:step1:sigNhom}
 \lvert \sigma_N^{\rm hom}(\ell) \rvert &\lesssim  \lvert \Dh e_N^{\rm hom}(\ell) \rvert
    && \hspace{-2cm} \forall \ell \in \Lhom_N \cap B_{\Rdef + \rcut},  \\
    \label{eq:prfconv:step1:sigNdef}
 \lvert  \sigma_N^{\rm def}(\ell) \rvert &\lesssim \lvert D e_N(\ell) \rvert,
      && \hspace{-2cm} \forall \ell \in \L_N \cap B_{\Rdef + \rcut}, \qquad \text{and} \\
      \label{eq:prfconv:step1:fbdry}
 \lvert f_N^{\rm bdry}(\ell)\rvert & \lesssim
    \begin{cases}
        0, & \ell \in \L_{N/3}, \\
        N^{-d-1}, & \ell \in \L_N \setminus \L_{N/3}.
    \end{cases}
\end{align}
The first two estimates follow simply from the fact that $V, V_\ell \in C^4$
while \eqref{eq:prfconv:step1:fbdry} follows from the second-order difference
structure of $f_N^{\rm bdry}(\ell)$ and \eqref{eq:convprf:properties_vN}.

Furthermore, Taylor expansions of $\delta \E_N^{\rm hom}(\us_N^{\rm hom})$ and
$\delta \E_N^{\rm hom}(v_N^{\rm hom})$ about $0$, and some elementary
manipulations yield
\begin{align*}
&\big\langle \delta \E_N^{\rm hom}(\bar{u}_N^{\rm hom}) - \delta \E_N^{\rm hom}(v_N^{\rm hom}),w \big\rangle \\
&= \big\langle \delta^2 \E_N^{\rm hom}(0) e_N^{\rm hom}, w \rangle
  + \int_0^1 \big\< \big[ \delta^2\E_N^{\rm hom}(t v_N^{\rm hom})
                        - \delta^2 \E_N^{\rm hom}(0) \big]
                    e_N^{\rm hom}, w \big\> \, dt  \\
& \hspace{3.7cm}
  + \int_0^1 \big\< \big[ \delta^2\E_N^{\rm hom}(t \us_N^{\rm hom})
                        - \delta^2 \E_N^{\rm hom}(t v_N^{\rm hom}) \big]
                    u_N^{\rm hom}, w \big\> \, dt \\
&= \big\langle \delta^2 \E_N^{\rm hom}(0) e_N^{\rm hom}, w \rangle \\
& \qquad + \int_0^1 \int_0^t \big\langle \big[
      \delta^3 \E_N^{\rm hom}( s v_N^{\rm hom} )e_N^{\rm hom}, v_N^{\rm hom}, w \big\rangle\,dt\,ds\\
& \qquad + \int_0^1 \int_0^t \big\langle \delta^3 \E_N^{\rm hom}( (t-s) v_N^{\rm hom} + s \us_N^{\rm hom})
      e_N^{\rm hom}, e_N^{\rm hom} + v_N^{\rm hom}, w \big\rangle\,dt\,ds.
\end{align*}
We test with $w(n) = \Dh G_N(n -\ell)$ then
\begin{equation} \label{eq:prfconv:step1:estimate_Djw}
    |(\Dh)^j w(n)| \lesssim \big({\rm dist}(\ell-m, 2N \mB \Z^d)+1\big)^{1-d-j},
\end{equation}
hence, for  $\lvert \ell \rvert > \Rdef + \rcut$, we obtain
\begin{align*}
|De_N(\ell)| &= \lvert \Dh e_N^{\rm hom}(\ell) \rvert
= \big\langle \delta^2 \E_N^{\rm hom}(0) (\bar{u}_N^{\rm hom} - v_N^{\rm hom}) ,w \rangle  \\
&\lesssim
  \big| \big( \sigma_N^{\rm hom}, \Dh w \big)_{\ell^2(\Lhom_N)} \big|
   + \big| \big( \sigma_N^{\rm def}, DS_N^{\rm def} w \big)_{\ell^2(\L_N)} \big|
   + \big\lvert (f_N^{\rm bdry}, S_N^{\rm def} w)_{\ell^2(\L_N)} \big\rvert  \\
 &\quad
   + \sum_{m \in \Lhom_N} |De_N^{\rm hom}(m)|^2 \, |Dw(n)| \\
&\quad
  + \sum_{m \in \Lambda_N^{\rm hom}} \lvert D e_N^{\rm hom}(m) \rvert
  \lvert Dw(m) \rvert \, |Dv_N^{\rm hom}(m)| \\
&=: {\rm T}_{1} + {\rm T}_{2} + {\rm T}_{3} + {\rm T}_4 + {\rm T}_5.
\end{align*}
The fifth term is already of the form we require: We can employ
\eqref{eq:prfconv:step1:estimate_Djw} to bound $Dw$ and
\eqref{eq:convprf:properties_vN} to bound $Dv_N^{\rm hom}$. Furthermore, we use
Lemma~\ref{th:properties_S} to bound $De_N^{\rm hom}$ by $De_N$ to arrive at
\begin{align*}
  {\rm T}_5
&\lesssim  \sum_{m \in \Lambda_N^{\rm hom}} \lvert D e_N(m) \rvert \big({\rm dist}(\ell-m, 2N \mB \Z^d)+1\big)^{-d} (1+|m|)^{-d}
\end{align*}
for $\lvert \ell \rvert > 2 \RS$. Using, \eqref{eq:prfconv:step1:fbdry} in combination with \eqref{eq:prfconv:step1:estimate_Djw}, as well as using $\lVert De_N^{\rm hom} \rVert_{\ell^2} \leq N^{-d/2}$ we get
$
  {\rm T}_3 +{\rm T}_4 \lesssim N^{-d}.
$
Finally, for ${\rm T}_1$ and ${\rm T}_2$ and again, $\lvert \ell \rvert > 2 \RS$, we use \eqref{eq:prfconv:step1:sigNhom} and \eqref{eq:prfconv:step1:sigNdef} to estimate
\begin{align*}
{\rm T}_1 + {\rm T}_2 &\lesssim \hspace{-3mm} \sum_{m \in \Lhom_N \cap B_{\RS}} \hspace{-3mm} \lvert \Dh e_N^{\rm hom} (m) \rvert \lvert D^2 \G (\ell -m) \rvert + \hspace{-3mm} \sum_{m \in \L_N \cap B_{\RS}} \hspace{-3mm} \lvert D e_N (m) \rvert \lvert D S^{\rm def} \Dh \G (\ell -m) \rvert \\
&\lesssim \hspace{-3mm} \sum_{k \in \Lhom_N \cap B_{\RS}} \sum_{m \in \L_N \cap B_{\RS}} \lvert D e_N (m) \rvert \lvert D^2 \G (\ell -k) \rvert \\
&\lesssim \sum_{m \in \Lambda_N^{\rm hom}\cap B_{\RS}} \lvert D e_N(m) \rvert \big({\rm dist}(\ell-m, 2N \mB \Z^d)+1\big)^{-d}. \qedhere
\end{align*}

%
\end{proof}

Next, we prove a discrete Caccioppoli estimate.

\begin{lemma} \label{lem:ell2}
  There exist $r_1, C_2 > 0$ such that, for $r_1 \leq r \leq N/4$,
  \[
    \| De_N \|_{\ell^2(\L_{r/2})} \leq C_2 \| De_N\|_{\ell^2(\L_{2r} \setminus \L_{r/2})}.
  \]
\end{lemma}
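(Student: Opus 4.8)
The plan is to exploit the Euler--Lagrange equations for $\bar u_N$ and $v_N = T_N^{\rm per}\bar u$ together with a localising cut-off, in the classical manner of Caccioppoli's inequality, but carried out in the discrete periodic setting. First I would introduce a cut-off $\chi$ supported in $\L_{2r}$ with $\chi\equiv 1$ on $\L_r$ and $|D\chi|\lesssim r^{-1}$, $|D^2\chi|\lesssim r^{-2}$ (the analogue of the $\eta_R$ of \S\ref{sec:proofs:aux}), and test the equation for $e_N$ with $v = \chi^2 e_N$ (suitably interpreted so that it lies in $\Wper_N$, which is harmless since $r\le N/4$). Using the equilibrium identities $\langle\delta\E_N(\bar u_N),v\rangle=0$ and the near-equilibrium property of $v_N$ away from the core (where $f_N^{\rm bdry}$ vanishes on $\L_{N/3}$ by \eqref{eq:prfconv:step1:fbdry}), plus a Taylor expansion around $Dv_N$, one reduces the leading term to the second variation $\langle\delta^2\E_N(v_N)\,\chi e_N,\chi e_N\rangle$ plus commutator terms involving $D\chi$, plus the cubic remainder controlled by the decay $\|De_N\|_{\ell^2}\lesssim N^{-d/2}$ from Theorem~\ref{th:convergence_Enorm} and by \eqref{eq:decay_ubar}, \eqref{eq:convprf:properties_vN}.

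The second ingredient is coercivity of $\delta^2\E_N(v_N)$ on functions supported in the annular region away from the defect core. For $r$ large enough, on $\L_{2r}\setminus B_{r_1}$ the strains $Dv_N$ are small (by \eqref{eq:decay_ubar} and Corollary~\ref{th:decay_TN_ubar}), so $\nabla^2V_\ell(Dv_N(\ell))$ is a small perturbation of $\nabla^2V(0)$; combined with the homogeneous phonon stability \eqref{eq:stabhom}, reformulated via $S_N^{\rm hom}$ as in the proof of Lemma~\ref{th:inf-sup-N}, this gives a lower bound $\langle\delta^2\E_N(v_N)w,w\rangle\ge \frac{c_0}{2}\|Dw\|_{\ell^2}^2$ for $w$ supported in $\L_N\setminus B_{r_1}$. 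To apply this to $w=\chi e_N$ I would first split off the innermost core $B_{r_1}$: either absorb $\|De_N\|_{\ell^2(B_{r_1})}$ into the right-hand side (it is dominated by $\|De_N\|_{\ell^2(\L_{2r}\setminus\L_{r/2})}$ only after one more iteration — see below), or, more cleanly, prove the estimate first for the shifted annuli and note $r_1$ is a fixed constant so the finitely many core terms are controlled by interior estimates.

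Putting these together yields
\[
  c\,\|D(\chi e_N)\|_{\ell^2}^2
  \;\lesssim\;
  \|D\chi\cdot e_N\|_{\ell^2(\L_{2r}\setminus\L_r)}^2
  + \varepsilon_r\,\|De_N\|_{\ell^2(\L_{2r})}^2
  + \text{(lower order)},
\]
where the cross/commutator term is handled by the discrete product rule $D_\rho(\chi e_N) = \chi(\ell+\rho)D_\rho e_N + D_\rho\chi(\ell)e_N(\ell)$, and the factor $e_N$ against $D\chi$ is converted to $De_N$ by the discrete Poincar\'e inequality, Lemma~\ref{lem:poincare}, applied on the annulus $\L_{2r}\setminus\L_{r/2}$ (this is exactly why the larger annulus appears on the right-hand side, and why one needs $r\ge r_1$ so that the annulus avoids the defect). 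After absorbing the $\varepsilon_r$ term for $r$ large and discarding the lower-order $N^{-d}$-type contributions (which are dominated by the right-hand side for the relevant range of $r$, or can be added as a harmless extra term), one obtains $\|De_N\|_{\ell^2(\L_{r/2})}\le \|D(\chi e_N)\|_{\ell^2}\lesssim\|De_N\|_{\ell^2(\L_{2r}\setminus\L_{r/2})}$, which is the claim.

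The main obstacle I anticipate is the bookkeeping around the defect core and the periodic boundary: ensuring that $\chi^2 e_N$ is a legitimate test function in $\Wper_N$, that the identity relating $\delta\E_N(\bar u_N)$ and $\delta\E_N(v_N)$ picks up only the controllable $f_N^{\rm bdry}$ term (which is why $r\le N/4$ keeps us inside $\L_{N/3}$), and that the perturbative coercivity argument via $S_N^{\rm hom}$/$S_N^{\rm def}$ applies uniformly once $r\ge r_1$. The cubic-remainder terms are genuinely lower order and should not cause trouble given the pointwise decay already established, but they must be tracked carefully so that nothing worse than $\|De_N\|_{\ell^2(\L_{2r}\setminus\L_{r/2})}$ survives on the right.
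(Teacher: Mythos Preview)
Your approach is the classical Caccioppoli argument (test with $\chi^2 e_N$ and invoke coercivity of the second variation), and it would go through essentially as you describe if $\bar u$ were a strict local \emph{minimiser}. The gap is that the paper only assumes inf-sup stability \eqref{eq:infsup}, which explicitly allows $\bar u$ to be a saddle point; in that case $\delta^2\E_N(v_N)$ has a nontrivial finite-dimensional negative subspace $\WHH_{N,-}$ spanned by truncations of eigenfunctions localised near the defect core (Lemma~\ref{th:infsup-proof:prelim}(ii) and Lemma~\ref{th:inf-sup-N}). Your ``second ingredient'', the lower bound $\langle\delta^2\E_N(v_N)w,w\rangle\ge\frac{c_0}{2}\|Dw\|_{\ell^2}^2$, therefore fails precisely for $w=\chi e_N$, whose support contains the core. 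The proposed fix of ``splitting off $B_{r_1}$'' is circular: controlling $\|De_N\|_{\ell^2(B_{r_1})}$ is the entire content of the lemma, and an annular Caccioppoli inequality cannot be iterated inward through a region where coercivity is lost.

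The paper sidesteps this by replacing quadratic-form coercivity with the inf-sup condition of Lemma~\ref{th:inf-sup-N}. One bounds $\|Dz\|_{\ell^2}$ for $z=T_{N,r}^{\rm per}e_N$ by $\sup_w\int_0^1\langle\delta^2\E_N(v_N+te_N)\,z,w\rangle\,dt$, then uses that the supports of $Dz$ and $D(I-T_{N,2r}^{\rm per})w$ are disjoint to replace $w$ by $T_{N,2r}^{\rm per}w$. At this point the integral equals $\langle\delta\E_N(\bar u_N)-\delta\E_N(v_N),T_{N,2r}^{\rm per}w\rangle$ plus the commutator term involving $(T_{N,r}^{\rm per}-I)e_N$; since $Dv_N=D\bar u$ on $\L_{2r}$ when $r\le N/4$, both equilibrium identities $\langle\delta\E_N(\bar u_N),\cdot\rangle=0$ and $\langle\delta\E(\bar u),\cdot\rangle=0$ apply and kill the main term exactly, leaving only the commutator, which is supported on the annulus and controlled by Lemma~\ref{th:TR_estimates}. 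The point is that taking the supremum over $w$ lets the argument accommodate the negative directions of the Hessian without ever asserting positivity on the truncated error itself.
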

\begin{proof}
  Inf-sup stability of $v_N$ established in Lemma~\ref{th:inf-sup-N} and the
  convergence $\|D e_N \|_{\ell^2} \to 0$ (cf. Theorem~\ref{th:convergence_Enorm}) imply that there exists $c_1 > 0$ such that,
  for all $N$ sufficiently large,
  \[
      \sup_{\substack{w \in \Wper_N \\ \|Dw\|_{\ell^2} = 1}}
       \int_0^1 \big\langle \delta^2\E_N(v_N + t e_N) z, w \big\rangle \, dt
       \geq c_1 \lVert Dz \rVert_{\ell^2} \qquad \forall z \in \Wper_N.
  \]
  In the rest of this proof we will write $\sup_w = \sup_{w \in \Wper_N,
  \|Dw\|_{\ell^2} = 1}$ . Fix $r > 0$ and insert $z = T_{N,r}^{\rm per} e_N$ in
  the inf-sup condition, then we can use the fact that the supports of $Dz$ and
  $D(I-T_{N,2r}^{\rm per}) w$ do not overlap to write
  \begin{align*}
    \| D z \|_{\ell^2}
    &\lesssim \sup_{w}
    \int_0^1 \big\< \delta^2\E_N(v_N + t e_N) z, w \big\> \,dt \\
    &= \sup_{w}
    \int_0^1 \big\< \delta^2\E_N(v_N + t e_N) z, T_{N,2r}^{\rm per} w \big\> \,dt \\
    &= \sup_{w} \bigg( \int_0^1 \big\< \delta^2\E_N(v_N + t e_N) (T_{N,r}^{\rm per}-I) e_N, T_{N,2r}^{\rm per} w \big\> \,dt \\
    & \hspace{3cm} + \big\< \delta\E_N(\us_N) - \delta\E_N(v_N), T_{N,2r}^{\rm per} w \big\>\bigg).
  \end{align*}
  We clearly have $\< \delta\E_N(\us_N), T_{N,2r}^{\rm per} w \> = 0$. Moreover,
  \begin{align*}
    \big\< \delta\E_N(v_N), T_{N,2r}^{\rm per} w \big\>
    &= \sum_{\ell \in \L_{2r}} \nabla V_\ell(Dv_N(\ell))\big[DT_{N,2r}^{\rm per} w\big] \\
    &= \sum_{\ell \in \L_{2r}} \nabla V_\ell(D\us(\ell))\big[DT_{2r} w\big] \\
    &= \big\< \delta \E(\us), T_{2r} w \big\> = 0,
  \end{align*}
  which leaves us with only the term
  \begin{align*}  
      \| D z \|_{\ell^2}
      & \lesssim
      \sup_{w} \int_0^1 \big\< \delta^2\E_N(v_N + t e_N) (T_{N,r}^{\rm per}-I) e_N, T_{N,2r}^{\rm per} w \big\> \,dt.
  \end{align*}
  Since $DT_{N,2r}^{\rm per} w  = 0$ in $\L \setminus \L_{2r}$ we can estimate this further
  by
  \begin{align*}
    \| D z \|_{\ell^2}
    &\lesssim
    \sup_w  \sum_{\ell \in \L_{2r}}
            \big| D(T_{N,r}^{\rm per}-I) e_N(\ell) \big| \,
            \big| D T_{N,2r}^{\rm per} w(\ell) \big| \\
    &\lesssim \sup_w  \|  D(T_{N,r}^{\rm per}-I) e_N(\ell) \|_{\ell^2(\L_{2r})} \,
        \| D T_{N,2r}^{\rm per} w \|_{\ell^2} \\
    &\lesssim
    \|  D(T_{N,r}^{\rm per}-I) e_N(\ell) \|_{\ell^2(\L_{2r})},
  \end{align*}
  where, in the last estimate, we used Lemma~\ref{th:TR_estimates}
   to bound $\| D
  T_{N,2r}^{\rm per} w \|_{\ell^2} \lesssim \|Dw\|_{\ell^2} \lesssim 1$.

  Using Lemma~\ref{th:TR_estimates} a second time we finally deduce that
  \begin{align*}
  	\| D e_N \|_{\ell^2(\L_{r/2})} &\leq \| D z \|_{\ell^2(\L_N)}
  	\lesssim \|  D(T_{r}-I) e_N(\ell) \|_{\ell^2(\L_{2r})} \\
  	&\lesssim \|  D(T_{r}-I) e_N(\ell) \|_{\ell^2(\L_{r})} + \|  De_N(\ell) \|_{\ell^2(\L_{2r}\setminus \L_{r})}\\
  	&\lesssim \|  De_N(\ell) \|_{\ell^2(\L_{2r}\setminus \L_{r/2})}. \qedhere
  \end{align*}
\end{proof}

Our main result, Theorem~\ref{th:main theorem}, will follow
from the next intermediate result, which is of independent interest.

\begin{theorem} \label{thm:ellinftyregularity}
  Under the conditions of Theorem~\ref{th:main theorem},
  \[
      \lVert D e_N \rVert_{\ell^\infty(\L_N)} \lesssim N^{-d}.
  \]
\end{theorem}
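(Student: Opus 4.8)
The proof rests on the three ingredients already in place: the implicit pointwise bound of Lemma~\ref{lem:pointwise}, the discrete Caccioppoli inequality (Lemma~\ref{lem:ell2}), and the energy-norm rate $\lVert D e_N\rVert_{\ell^2(\L_N)}\lesssim N^{-d/2}$ of Theorem~\ref{th:convergence_Enorm}. The plan is to first obtain a pointwise bound that is already sharp in the outer part of the cell, and then to run an \emph{outer-to-inner} recursion that converts pointwise control on an annulus into $\ell^2$-control (Caccioppoli) and that back into improved pointwise control on a larger region (Lemma~\ref{lem:pointwise}), the whole recursion being arranged as a genuine contraction so that no logarithmic factors are accumulated.

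\emph{Step 1 (preliminary far-field bound).} Apply the Cauchy--Schwarz inequality to the convolution sum in Lemma~\ref{lem:pointwise}. The key observation is that, uniformly in $\ell\in\L_N$,
\[
   \sum_{m\in\L_N}\big({\rm dist}(\ell-m,2N\mB\Z^d)+1\big)^{-2d}(1+|m|)^{-2d}\;\lesssim\;(1+|\ell|)^{-2d},
\]
since the exponent $2d>d$ makes both weights summable and the sum is dominated by the contributions of $m$ near $0$ and of $m$ near $\ell$, each of order $(1+|\ell|)^{-2d}$ (for the periodic kernel no wrap-around enters in this bound). Together with $\lVert D e_N\rVert_{\ell^2(\L_N)}\lesssim N^{-d/2}$ this gives
\[
   |D e_N(\ell)|\;\lesssim\;N^{-d}+(1+|\ell|)^{-d}N^{-d/2}
   \qquad (\ell\in\L_N\setminus\L_{r_0}),
\]
and in particular $|De_N(\ell)|\lesssim N^{-d}$ whenever $|\ell|\gtrsim N^{1/2}$.

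\emph{Step 2 (the recursion).} Set $\Psi(\rho):=\lVert De_N\rVert_{\ell^\infty(\L_N\setminus\L_\rho)}$, a non-increasing function of $\rho$. Fix $\ell\in\L_N\setminus\L_{\rho_\star}$ with $|\ell|\le N/16$ (where $\rho_\star\ge\max(r_0,r_1)$ is a fixed constant, to be chosen), and split the sum in Lemma~\ref{lem:pointwise} into $m\in\L_{2|\ell|}$ and $m\in\L_N\setminus\L_{2|\ell|}$. For the outer part one bounds $|De_N(m)|\le\Psi(2|\ell|)$ and uses $\big({\rm dist}(\ell-m,2N\mB\Z^d)+1\big)^{-d}(1+|m|)^{-d}\lesssim(1+|m|)^{-2d}$ for $|m|\gtrsim|\ell|$ (no wrap-around, as $|\ell|$ is not too large), which sums to $\lesssim|\ell|^{-d}\Psi(2|\ell|)$. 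For the inner part, Cauchy--Schwarz as in Step 1 gives $\lesssim(1+|\ell|)^{-d}\lVert De_N\rVert_{\ell^2(\L_{2|\ell|})}$, and now Lemma~\ref{lem:ell2} (applied with $r=4|\ell|$) bounds $\lVert De_N\rVert_{\ell^2(\L_{2|\ell|})}$ by $\lVert De_N\rVert_{\ell^2(\L_{8|\ell|}\setminus\L_{2|\ell|})}\lesssim|\ell|^{d/2}\Psi(2|\ell|)$, because the annulus $\L_{8|\ell|}\setminus\L_{2|\ell|}\subseteq\L_N\setminus\L_{2|\ell|}$ has $\lesssim|\ell|^d$ sites. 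Altogether
\[
   |De_N(\ell)|\;\lesssim\;N^{-d}+|\ell|^{-d/2}\Psi(2|\ell|)\qquad(\ell\in\L_N\setminus\L_{\rho_\star},\ |\ell|\le N/16),
\]
while Step 1 handles $|\ell|>N/16$. Taking the supremum over $\ell\in\L_N\setminus\L_\rho$ and using that $\Psi$ is non-increasing, one obtains $\Psi(\rho)\le C\big(N^{-d}+\rho^{-d/2}\Psi(\rho)\big)$ for all $\rho_\star\le\rho\le N/16$. Choosing $\rho_\star$ (a fixed constant) large enough that $C\rho_\star^{-d/2}\le\tfrac12$, and recalling that $\Psi(\rho)$ is finite, the last term is absorbed, so $\Psi(\rho_\star)\lesssim N^{-d}$; that is, $|De_N(\ell)|\lesssim N^{-d}$ for all $\ell\in\L_N\setminus\L_{\rho_\star}$. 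Since the contraction constant is uniform in $\rho$, no logarithmic losses occur.

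\emph{Step 3 (the core) and conclusion.} The remaining region $\L_{\rho_\star}$ is a fixed, $N$-independent set of sites, and
\[
   \lVert De_N\rVert_{\ell^\infty(\L_{\rho_\star})}\le\lVert De_N\rVert_{\ell^2(\L_{\rho_\star})}\le C_2\lVert De_N\rVert_{\ell^2(\L_{4\rho_\star}\setminus\L_{\rho_\star})}
   \lesssim\lVert De_N\rVert_{\ell^\infty(\L_N\setminus\L_{\rho_\star})}\lesssim N^{-d}
\]
by one more use of Lemma~\ref{lem:ell2} together with the bound from Step 2. Combining the two estimates yields $\lVert De_N\rVert_{\ell^\infty(\L_N)}\lesssim N^{-d}$; Theorem~\ref{th:main theorem} then follows because $De_N=D\bar u_N-DT_N^{\rm per}\bar u$ and $\lVert DT_N^{\rm per}\bar u-D\bar u\rVert_{\ell^\infty(\L_N)}\lesssim N^{-d}$ by \eqref{eq:decay_ubar} and \eqref{eq:convprf:properties_vN}.

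The step I expect to be the main obstacle is Step 2: one must split the sum in Lemma~\ref{lem:pointwise} at the scale $2|\ell|$ (rather than the natural-looking $|\ell|/2$) and combine it with the Caccioppoli inequality so that $De_N(\ell)$ is genuinely controlled by $De_N$ at \emph{larger} scales with a clean contraction constant; with the opposite splitting one only gets a recursion relating small scales to smaller scales, and a naive dyadic iteration introduces spurious logarithmic factors. The second delicate point is keeping the wrap-around contributions in the periodic Green's-function sums under control uniformly in $\ell$, which is where the sharp decay of $(\Dh)^2\G_N$ from Lemma~\ref{th:error estimate FN} is essential.
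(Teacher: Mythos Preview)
Your argument is correct and lands on the same contraction inequality as the paper, $\omega(r)\lesssim N^{-d}+r^{-d/2}\omega(r)$ (with $\omega=\Psi$), which is then absorbed at a fixed scale and followed by one more application of Caccioppoli on the finite core. The overall strategy---pointwise Green's-function bound from Lemma~\ref{lem:pointwise}, split the convolution, feed the inner $\ell^2$-norm through Lemma~\ref{lem:ell2} back to $\ell^\infty$ on an annulus---is identical.

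The one noteworthy difference is where you split. You cut the convolution at the $\ell$-dependent radius $2|\ell|$, which forces you to treat the range $|\ell|>N/16$ separately (your Step~1), since Caccioppoli at scale $4|\ell|$ is unavailable there. The paper instead cuts at the \emph{fixed} radius $r$, the same $r$ that appears in $\omega(r)$: the outer part $m\notin\L_r$ is bounded by $\omega(r)$ times the full convolution $\sum_m(1+\mathrm{dist}(\ell-m,2N\mB\Z^d))^{-d}(1+|m|)^{-d}\lesssim r^{-d}\log r$ (uniformly over $\ell\notin\L_r$, periodic images included), while the inner part $m\in\L_r$ gives $r^{-d}\|De_N\|_{\ell^2(\L_r)}\lesssim r^{-d/2}\omega(r)$ after Caccioppoli. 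This reaches the contraction directly, with no preliminary step and no $\ell$-dependent scales. Your concern that ``one must split at $2|\ell|$'' to avoid logarithms is therefore misplaced: in the paper's version a logarithm does appear, as $r^{-d}\log r$, but since $r$ is ultimately a fixed constant it is harmless and in any case dominated by $r^{-d/2}$. Both routes are valid; the paper's is shorter.
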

\begin{proof}
Let $\omega(r) := \| De_N \|_{\ell^\infty(\L_N \setminus
\L_r)}$, then according to Lemma \ref{lem:pointwise}, for
$r \geq r_0$,
\begin{align*}
\omega(r) &\lesssim N^{-d} + \sup_{\ell \in \L_N \setminus \L_r}
    \sum_{m \in \Lambda_N} \big({\rm dist}(\ell-m, 2N \mB \Z^d)+1\big)^{-d}
        (1+\lvert m \rvert)^{-d} \lvert D e_N(m) \rvert \\
&\lesssim N^{-d} + \omega(r)
    \sup_{\ell \in \L_N \setminus \L_r}
    \sum_{m \in \Lambda_N\setminus\L_r} \big({\rm dist}(\ell-m, 2N \mB \Z^d)+1\big)^{-d} (1+ |m|)^{-d} \\
& \hspace{2.1cm}
 + \sup_{\ell \in \L_N \setminus \L_r}
    \sum_{m \in \L_r} (1+\lvert \ell - m \rvert)^{-d}
    (1+\lvert m \rvert)^{-d} \lvert D e_N(m) \rvert\\
    &\lesssim N^{-d} + \omega(r)
    \sup_{\ell \in \L_N \setminus \L_r} \sum_{z \in \{-1,0,1\}^d }
    \sum_{m \in\Lambda_N} (1+\lvert \ell-m -2N \mB z \rvert)^{-d} (1+ |m|)^{-d} \\
& \hspace{2.1cm}
 + \sup_{\ell \in \L_N \setminus \L_r}
    \Big( \sum_{m \in \L_r} (1+\lvert \ell - m \rvert)^{-2d}
    (1+\lvert m \rvert)^{-2d}\Big)^{1/2} \lVert D e_N \rVert_{\ell^2(\L_r)}\\
&\lesssim N^{-d} + \omega(r)
    \sup_{\ell \in \L_N \setminus \L_r} \max_{z \in \{-1,0,1\}^d } |\ell-2N \mB z|^{-d} \log|\ell-2N \mB z|\\
& \hspace{2.1cm}
 + \sup_{\ell \in \L_N \setminus \L_r}
    |\ell|^{-d} \lVert D e_N \rVert_{\ell^2(\L_r)}\\
&\lesssim  N^{-d} + \omega(r) r^{-d} \log(r) + r^{-d} \lVert D e_N \rVert_{\ell^2(\L_r)} .
\end{align*}
Here we used that, for $|\ell| \geq 2$,
\begin{align*}
\sum_{m \in \L} (1+\lvert \ell-m \rvert)^{-d} (1+ |m|)^{-d} &\lesssim |\ell|^{-d} \log|\ell|, \qquad \text{ and}\\
\sum_{m \in \L} (1+\lvert \ell-m \rvert)^{-2d} (1+ |m|)^{-2d} &\lesssim |\ell|^{-2d}.
\end{align*}

We apply the Caccioppoli inequality, Lemma~\ref{lem:ell2}, further
restricting to $r_1/2 \leq r \leq N/8$, to continue to estimate
\begin{align*}
\omega(r)
&\lesssim N^{-d} + \omega(r) r^{-d} \log(r) + \lVert D e_N \rVert_{\ell^2(\L_{4r} \setminus \L_r)} r^{-d}\\
&\leq C_3\big( N^{-d} + \omega(r) r^{-d/2} \big).
\end{align*}
For $r_2 := (2 C_3)^{2/d}$ and $r \geq r_3 := \max\{r_0, r_1, r_2\}$, we thus
find $\omega(r) \leq 2C_3 N^{-d}$. That is, we have proven that $|De_N(\ell)|
\lesssim N^{-d}$ for all $\ell \in \L_N \setminus \L_{r_3}$, where $r_3$ is
independent of $N$.

It thus remains only to consider $\ell \in \L_{r_3}$, a finite subdomain.
Applying Lemma~\ref{lem:ell2} a second time we obtain
\[
  \lvert De_N(\ell) \rvert \leq \lVert De_N \rVert_{\ell^2(\L_{r_3})}
   \lesssim  \lVert De_N \rVert_{\ell^2(\L_{4 r_3} \setminus \L_{r_3})}
   \lesssim \omega(r_3)
   \lesssim N^{-d}. \qedhere\]
\end{proof}

\begin{proof}[Proof of Theorem~\ref{th:main theorem}]
  We split
  \begin{align*}
      \| D\us_N - D\us \|_{\ell^\infty(\L_N)}
      &\leq \| D e_N \|_{\ell^\infty(\L_N)}
        + \| D v_N - D \us \|_{\ell^\infty(\L_N)} \\
      &\lesssim N^{-d} + \| D v_N - D \us \|_{\ell^\infty(\L_N)},
  \end{align*}
  where we used Theorem~\ref{thm:ellinftyregularity}.
  For $\ell \in \L_{N/2}$, $D v_N(\ell) - D \us(\ell) = 0$.
  Conversely, for $\ell \in \L_N \setminus \L_{N/2}$,
  \eqref{eq:convprf:properties_vN} and \eqref{eq:decay_ubar} imply that
  \[
      |Dv_N(\ell) - D\us(\ell) | \leq |D v_N(\ell)| + |D \us(\ell)| \lesssim N^{-d}. \qedhere
  \]
\end{proof}

\bibliographystyle{alpha}
\bibliography{bib}

\end{document}